
\documentclass[reqno,a4paper,final]{amsart}

\usepackage[utf8]{inputenc}
\usepackage[T1]{fontenc}
\usepackage[english]{babel}
\usepackage{ifthen}
\usepackage{enumitem}
\usepackage{dsfont}
\usepackage{mathtools}
\usepackage{amssymb}
\usepackage[babel]{csquotes}


\numberwithin{equation}{section} 

\newtheorem{lemma}{Lemma}[section]
\newtheorem{corollary}[lemma]{Corollary}
\newtheorem{proposition}[lemma]{Proposition}
\newtheorem{theorem}[lemma]{Theorem}

\theoremstyle{definition}
\newtheorem{definition}[lemma]{Definition}

\newtheorem{remark}[lemma]{Remark}


\newlist{thm_enum}{enumerate}{1}
\setlist[thm_enum]{label=\normalfont(\alph*)}
\newlist{def_enum}{enumerate}{1}
\setlist[def_enum]{label=\normalfont(\roman*)}
\newlist{equiv_enum}{enumerate}{1}
\setlist[equiv_enum]{label=\normalfont(\roman*)}


\newcommand{\IZ}{\mathbb{Z}}

\newcommand{\IN}{\mathbb{N}}
\newcommand{\IR}{\mathbb{R}}

\newcommand{\abs}[1]{\left\lvert#1\right\rvert}

\newcommand{\norm}[1]{\left\lVert#1\right\rVert}
\newcommand{\normalnorm}[1]{\lVert#1\rVert}
\newcommand{\biggnorm}[1]{\biggl\lVert#1\biggr\rVert}
\newcommand{\bignorm}[1]{\bigl\lVert#1\bigr\rVert}

\newcommand{\R}[2][\empty]{
	\ifthenelse{\equal{#1}{\empty}}
		{\mathcal{R}\left\{#2\right\}}
		{\mathcal{R}_{#1}\left\{#2\right\}}
}

\makeatletter
\newcommand{\LeftEqNo}{\let\veqno\@@leqno}
\makeatother

\renewcommand{\d}{\mathop{}\!d}

\renewcommand{\epsilon}{\varepsilon}

\let\temp\phi
\let\phi\varphi
\let\varphi\temp


\allowdisplaybreaks[4]


%
%
%
%
%
%
%


\begin{document}

\title[]{Off-Diagonal Sharp Two-Weight Estimates for Sparse Operators}

\begin{abstract}
For a class of sparse operators including majorants of singular integral, square function, and fractional integral operators in a uniform manner, 
we prove off-diagonal two-weight estimates of mixed type in the two-weight and $A_{\infty}$-characteristics.
These bounds are known to be sharp in many cases; as a new result, we prove their sharpness for fractional square functions.
\end{abstract}

\author{Stephan Fackler}
\address{Institute of Applied Analysis, Ulm University, Helmholtzstr.\ 18, 89069 Ulm, Germany}
\email{stephan.fackler@alumni.uni-ulm.de}

\author{Tuomas P.\ Hyt\"onen}
\address{Department of Mathematics and Statistics, P.O.B.\ 68 (Gustaf H\"allstr\"omin katu 2b), FI-00014 University of Helsinki, Finland}
\email{tuomas.hytonen@helsinki.fi}

\thanks{The first author was supported by the DFG grant AR 134/4-1 ``Regularit\"at evolution\"arer Probleme mittels Harmonischer Analyse und Operatortheorie''.  The second author was supported by the ERC Starting Grant ``Analytic-probabilistic methods for borderline singular integrals'' (grant agreement No. 278558), and by the Academy of Finland via the Finnish Centre of Excellence in Analysis and Dynamics Research (project Nos. 271983 and 307333). Parts of the work were done during a research stay of the first author at the University of Helsinki. He thanks the members of the harmonic analysis group for their hospitality.}
\keywords{$A_p$-$A_{\infty}$ estimates, sparse operators, off-diagonal estimates.}
\subjclass[2010]{Primary 42B20. Secondary 42B25, 47G10, 47G40.}


\maketitle

\section{Introduction}

	We prove two-weight $L^p_{\sigma} \to L^q_{\omega}$ estimates for \emph{sparse operators}
	\begin{equation}\label{eq:sparse_operator}
		A^{r,\alpha}_{\mathcal{S}}(f) = \biggl( \sum_{Q \in \mathcal{S}} (\abs{Q}^{-\alpha} \int_Q f)^r \mathds{1}_Q \biggr)^{1/r},
	\end{equation}
	where $\mathcal{S}$ is any sparse collection of dyadic cubes as defined below. By now it is known that such $A^{r,\alpha}_{\mathcal{S}}$ dominate many classical operators $T$ in the sense of pointwise estimates of the type
	\begin{equation}\label{eq:sparse_bound}
		\abs{Tf(x)} \lesssim \sum_{j=1}^{N} A_{\mathcal{S}_j}^{r,\alpha} \abs{f}(x),
	\end{equation}
	where the collections $\mathcal S_j$ depend on the function $f$, but the implied constants do not, and so the norm estimates for $A^{r,\alpha}_{\mathcal{S}}$, uniform over the sparse collection $\mathcal S$, imply similar estimates for the corresponding $T$.
	 
	Our main result reads as follows:
	
	\begin{theorem}\label{thm:main}
	Let $1<p\leq q < \infty$, $0<r<\infty$, and $0<\alpha\leq 1$.
	Let $\omega,\sigma\in A_\infty$ be two weights.
	Then $A^{r,\alpha}_{\mathcal S}(\cdot\sigma)$ maps $L^p_\sigma\to L^q_\omega$ if and only if the two-weight $A_{pq}^\alpha$-characteristic
	\begin{equation*}
		[\omega,\sigma]_{A_{pq}^\alpha(\mathcal S)}:=\sup_{Q\in\mathcal S}\abs{Q}^{-\alpha}\omega(Q)^{1/q}\sigma(Q)^{1/p'}
	\end{equation*}
	is finite, and in this case
\begin{equation}\label{eq:mainres}
  1\leq\frac{\norm{A^{r,\alpha}_{\mathcal S}(\cdot\sigma)}_{L^p_\sigma\to L^q_\omega}}{[\omega,\sigma]_{A_{pq}^\alpha(\mathcal S)}}
  \lesssim
  \begin{cases}
     [\sigma]_{A_\infty}^{\frac{1}{q}}+[\omega]_{A_\infty}^{(\frac{1}{r} - \frac{1}{p})_+}, \qquad \text{unless $p=q>r$ and $\alpha<1$}, \\
     [\omega]_{A_{\infty}}^{\frac{1}{r} (1 - \frac{r}{p})^2} [\sigma]_{A_{\infty}}^{\frac{1}{r} ( 1 - (1 - \frac{r}{p})^2)} + [\omega]_{A_{\infty}}^{\frac{1}{r} (1 - (\frac{r}{p})^2)} [\sigma]_{A_{\infty}}^{\frac{1}{r} (\frac{r}{p})^2},
  \end{cases}
\end{equation}
where $x_+:=\max(x,0)$ in the exponent.
	\end{theorem}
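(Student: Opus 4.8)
The plan is to establish the left-hand bound in \eqref{eq:mainres} by testing and the right-hand bound by a corona decomposition of the input followed by a reverse-H\"older/packing analysis of the output. For the lower bound one tests $A^{r,\alpha}_{\mathcal S}(\cdot\,\sigma)$ on $f=\mathds{1}_Q$, $Q\in\mathcal S$: the summand indexed by $Q$ alone already gives $A^{r,\alpha}_{\mathcal S}(\mathds{1}_Q\sigma)\ge\abs{Q}^{-\alpha}\sigma(Q)\mathds{1}_Q$, so $\norm{A^{r,\alpha}_{\mathcal S}(\mathds{1}_Q\sigma)}_{L^q_\omega}/\norm{\mathds{1}_Q}_{L^p_\sigma}\ge\abs{Q}^{-\alpha}\omega(Q)^{1/q}\sigma(Q)^{1/p'}$, and taking the supremum over $Q\in\mathcal S$ yields both the inequality and the ``only if''. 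For the upper bound, monotone convergence lets us assume $\mathcal S$ finite and $0\le f\in L^p_\sigma$ with $\norm{f}_{L^p_\sigma}=1$; we abbreviate $C_0:=[\omega,\sigma]_{A_{pq}^\alpha(\mathcal S)}$. Because $t\mapsto\norm{\cdot}_{\ell^t}$ is non-increasing, $A^{r,\alpha}_{\mathcal S}\le A^{q,\alpha}_{\mathcal S}$ pointwise when $r\ge q$, and for $r=q$ the two weights cancel (as seen below), leaving the bound $[\sigma]_{A_\infty}^{1/q}$; so we may assume $0<r<q$.

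The next step is the \emph{$\sigma$-corona} of $f$: let $\mathcal F\subseteq\mathcal D$ be the stopping cubes at which $\langle f\rangle^\sigma_Q:=\sigma(Q)^{-1}\int_Q f\,\sigma$ first doubles, so that the principal sets $E_F$ are pairwise disjoint with $\sigma(E_F)\ge\tfrac12\sigma(F)$, each $Q\in\mathcal S$ has a minimal $\mathcal F$-ancestor $\pi Q$ with $\langle f\rangle^\sigma_Q\le 2a_{\pi Q}$, $a_F:=\langle f\rangle^\sigma_F$, and the $\sigma$-Carleson embedding theorem gives $\sum_{F\in\mathcal F}a_F^p\,\sigma(F)\lesssim_p\norm{f}_{L^p_\sigma}^p=1$ with a weight-independent constant. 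Feeding $\abs{Q}^{-\alpha}\le C_0\,\omega(Q)^{-1/q}\sigma(Q)^{-1/p'}$ and $\int_Q f\sigma=\sigma(Q)\langle f\rangle^\sigma_Q$ into the definition of $A^{r,\alpha}_{\mathcal S}$ yields the pointwise bound
\[
  A^{r,\alpha}_{\mathcal S}(f\sigma)\lesssim C_0\Bigl(\sum_{F\in\mathcal F}a_F^{\,r}\sum_{Q\in\mathcal S_F}v_Q^{\,r}\,\mathds{1}_Q\Bigr)^{1/r},\qquad v_Q:=\sigma(Q)^{1/p}\omega(Q)^{-1/q},
\]
where $\mathcal S_F:=\{Q\in\mathcal S:\pi Q=F\}$. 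It therefore remains to prove the weight-only estimate $\norm{\bigl(\sum_{F}a_F^r\sum_{Q\in\mathcal S_F}v_Q^r\mathds{1}_Q\bigr)^{1/r}}_{L^q_\omega}\lesssim K\,(\sum_F a_F^p\sigma(F))^{1/p}$, with $K$ the claimed upper bound for the ratio; the Carleson bound then closes the argument.

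Two facts drive this weight-only estimate. First, along any chain of stopping cubes through a point the averages $a_F$ grow geometrically, so the outer $\ell^r(\mathcal F)$-sum is comparable, up to an absolute constant, to its largest term. Second, each $\mathcal S_F$ is sparse, so the Lebesgue measures of its members decay geometrically along chains, and the elementary bound $\sum_{Q\in\mathcal S,\,Q\subseteq R}\nu(Q)\lesssim[\nu]_{A_\infty}\nu(R)$ --- immediate from the Fujii--Wilson form of $A_\infty$, applied with $\nu\in\{\sigma,\omega\}$ --- upgrades the pertinent Lebesgue packings to $\sigma$- and $\omega$-packings, each purchased with one power of the relevant $A_\infty$-characteristic. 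When $\alpha<1$ one has $v_Q^r=\abs{Q}^{r(1-\alpha)}\langle\sigma\rangle_Q^{r/p}\omega(Q)^{-r/q}$ with a genuine geometric factor $\abs{Q}^{r(1-\alpha)}$, which tames the inner sum; combined with the off-diagonal gain available when $p<q$, this handles every case but one. Concretely, if $r\ge p$ one controls the inner sum by $\norm{\cdot}_{\ell^r}\le\norm{\cdot}_{\ell^p}$ and reduces everything, via the $\sigma$-packing lemma and the Carleson bound, to $K=[\sigma]_{A_\infty}^{1/q}$ (the cancellation of $\omega$ alluded to above being the subcase $r=q$); if $r<p$ and one is outside the exceptional regime, an additional dual $\omega$-stopping/Carleson argument turns the $\ell^r$-overlap of $\mathcal S_F$ into its $L^q_\omega$-norm at the extra cost $[\omega]_{A_\infty}^{1/r-1/p}$; and if $p=q>r$ with $\alpha<1$ one has neither a scale gain nor an off-diagonal gain.

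The hard part is exactly this last, exceptional regime $p=q>r$, $\alpha<1$: the two weights genuinely interact, and the reverse H\"older inequalities for $\sigma$ and $\omega$ must be iterated \emph{simultaneously}, with the available $A_\infty$-room allocated optimally between them. Carrying out and optimizing this simultaneous iteration is what produces --- and what one must verify gives exactly --- the two-parameter exponent $[\omega]_{A_{\infty}}^{\frac{1}{r}(1-\frac{r}{p})^2}[\sigma]_{A_{\infty}}^{\frac{1}{r}(1-(1-\frac{r}{p})^2)}+[\omega]_{A_{\infty}}^{\frac{1}{r}(1-(\frac{r}{p})^2)}[\sigma]_{A_{\infty}}^{\frac{1}{r}(\frac{r}{p})^2}$ of \eqref{eq:mainres}, and one wants this allocation to be the sharp one, consistently with the extremal examples (notably those for fractional square functions) constructed later in the paper.
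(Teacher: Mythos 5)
Your lower bound is correct, and the $\sigma$-corona of $f$ together with the Fujii--Wilson $A_\infty$-packing inequality $\sum_{Q\in\mathcal S,\,Q\subseteq R}\nu(Q)\lesssim[\nu]_{A_\infty}\nu(R)$ are indeed the right ingredients (they are the paper's Lemma~\ref{lem:reduction_principal_cubes} and Lemma~\ref{lem:measure_estimate}). But the proposal stops exactly where the theorem is hard. For $r<p$ you appeal to an unspecified ``dual $\omega$-stopping/Carleson argument''; the paper instead first \emph{linearizes} $A^{r,\alpha}_{\mathcal S}$ (Lemma~\ref{lem:linearize_square_function}, whose proof uses the $L^{p/r}_\sigma$-boundedness of $M_\sigma$, which is precisely where $r<p$ enters) and then invokes the Lacey--Sawyer--Uriarte-Tuero two-weight characterization for linear positive dyadic operators (Lemma~\ref{lem:model_estimate}) to produce the dual testing constant $\mathcal{T}^*$. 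Without some such step, a direct two-sided corona on the nonlinear operator does not obviously close, and you give no indication of how it would. Far more seriously, the exceptional regime $p=q>r$, $\alpha<1$ --- the entire second case of \eqref{eq:mainres}, and the only part of the statement whose exponent is not already in the literature in some form --- is conceded to ``carrying out and optimizing this simultaneous iteration''. That \emph{is} the proof. In the paper (Theorem~\ref{thm:two_weight_sparse} via Lemmas~\ref{lem:norm_dyadic} and \ref{lem:measure_estimate}) the parameter $\delta$ extracted as a power of $[\omega,\sigma]_{A_{pq}^\alpha}$ from each summand must satisfy four sign constraints; exactly in this regime they pin $\delta=r$, so the inner packing sum has no Lebesgue-measure gain and must be paid for with $[\sigma]_{A_\infty}^{r/p}[\omega]_{A_\infty}^{1-r/p}$, and feeding this through the Cascante--Ortega--Verbitsky expansion and the outer packing for each of $\mathcal{T}$, $\mathcal{T}^*$ is what produces the exponents $(1-r/p)^2$ and $(r/p)^2$. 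You have named the answer but not found the argument.

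Two smaller slips are worth flagging. From $v_Q=\sigma(Q)^{1/p}\omega(Q)^{-1/q}$ one gets $v_Q^r=\abs{Q}^{r/p}\langle\sigma\rangle_Q^{r/p}\omega(Q)^{-r/q}$, not $\abs{Q}^{r(1-\alpha)}\langle\sigma\rangle_Q^{r/p}\omega(Q)^{-r/q}$; these agree only if $\alpha=1/p'$. And the claim that the outer $\ell^r(\mathcal F)$-sum ``is comparable, up to an absolute constant, to its largest term'' is not justified as stated: geometric growth of $a_F$ along a stopping chain does dominate $\sum_F a_F^r\mathds{1}_F(x)$, but the inner factor $\sum_{Q\in\mathcal S_F}v_Q^r\mathds{1}_Q(x)$ is not monotone in $F$, so the product sum is not a priori controlled by its largest term.
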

	 
	The necessity of the $A_{pq}^\alpha$-condition, and the lower bound in \eqref{eq:mainres}, follows simply by substituting $f=\mathds{1}_Q$ for any $Q\in\mathcal S$ and estimating $A^{r,\alpha}_{\mathcal{S}}(f\sigma)\geq A^{r,\alpha}_{\{Q\}}(f\sigma)=\abs{Q}^{-\alpha}\sigma(Q)\mathds{1}_Q$, so the main point of the theorem is the other estimate.
	 
	 Theorem \ref{thm:main} includes several known cases: (The ``Sobolev'' case $1/p-1/q=1-\alpha$ of these results, together with multilinear extensions, can also be recovered from the recent general framework of \cite{ZK}.)

	 \begin{itemize}
	    \item For $r=\alpha=1$, \eqref{eq:sparse_bound} holds for all Calder\'on--Zygmund operators. The most general version is due to \cite{Lac17}, with a simplified proof in \cite{Ler16}, but its variants go back to \cite{Ler13}. The bound \eqref{eq:mainres} in this case was obtained in \cite{HytPer13} for $p=q=2$ and in \cite{HytLac12} for general $p=q\in(1,\infty)$. 
	    These improved the $A_2$ theorem of \cite{Hyt12b} by replacing a part of the $A_2$ or $A_p$ constant by the smaller $A_\infty$ constant.
	    
	\item For $r=2$ and $\alpha=1$, \eqref{eq:sparse_bound} holds for several square function operators of Littlewood--Paley type \cite{Ler11}. For $p=q$, the mixed bound \eqref{eq:mainres}, even for general $r>0$, is from \cite{LacLi16}; this improves the pure $A_p$ bound of \cite{Ler11}.
	
	    \item For $r=1$ and $0<\alpha<1$, \eqref{eq:sparse_bound} holds for the fractional integral operator
\begin{equation*}
	    I_\gamma f(x)=\int_{\IR^d}\frac{f(y)}{\abs{x-y}^{n-\gamma}}dy,
\end{equation*}
when $\alpha=1-\gamma/n$ \cite{LMPT10}. When also $p<q$, \eqref{eq:mainres} is due to \cite{CruMoe13a}. The ``Sobolev'' case with $1/p-1/q=\gamma/n=1-\alpha\in(0,1)$ was obtained by the same authors in \cite{CruMoe13}, elaborating on the pure $A_{pq}$ bound of \cite{LMPT10}.
Additional complications with $p=q$, which lead to the weaker version of our bound \eqref{eq:mainres}, have been observed and addressed in different ways in \cite{CruMoe13a,CruMoe13}; see also the discussion in \cite[Section 7]{Cruz17}.

	\item For $0<r<1$ and $\alpha=1$, the operators \eqref{eq:sparse_operator} can be related to certain ``rough'' singular integral operators. Namely, several recent works starting with \cite{BFP} have established  sparse domination for ever bigger classes of operators $T$ in the form
\begin{equation*}
  \abs{\langle Tf, g\rangle}\lesssim\sum_{Q\in\mathcal S}\Big(\abs{Q}^{-1}\int_Q\abs{f}^s\Big)^{1/s}\Big(\abs{Q}^{-1}\int_Q\abs{g}^t\Big)^{1/t}\abs{Q},\end{equation*}
for some $s,t\geq 1$;
this is weaker than \eqref{eq:sparse_bound}, but still very powerful for many purposes.
For $s>1=t$, the above domination can be written as
\begin{equation}\label{eq:rough}
   \abs{\langle Tf, g\rangle}\lesssim\langle (A_{\mathcal S}^{1/s,1}\abs{f}^s)^{1/s} , \abs{g} \rangle,
\end{equation}
reducing $L^p$ bounds for $T$ to $L^{p/s}$ bounds for $A_{\mathcal S}^{r,1}$, where $r=1/s\in(0,1)$.
In particular, \cite{CCDO} have proved the bound \eqref{eq:rough} when $T=T_\Omega$ is a homogeneous singular integral with symbol $\Omega\in L_0^\infty(S^{n-1})$; in this case one can take any $s>1$ with implied constant $s'=s/(s-1)$ in \eqref{eq:rough}. Thus
\begin{equation*}
  \|T_\Omega\|_{L^p_\omega\to L^p_\omega}
  \lesssim 
  s'\sup_{\mathcal S}\| A_\mathcal S^{1/s,1}(\cdot\,\sigma_{p/s})\|_{L^{p/s}_{\sigma_{p/s}}\to L^{p/s}_\omega}^{1/s},
\end{equation*}
where $\sigma_{p/s}=\omega^{1-(p/s)'}$ is the $L^{p/s}$ dual weight of $\omega$. From the sharp reverse H\"older inequality for $A_\infty$ weights \cite{HytPer13} one checks, for $s=1+\epsilon_d/[\sigma]_{A_\infty}$, that
 $[\sigma_{p/s}]_{A_\infty}\lesssim[\sigma_p]_{A_\infty}$ and $[\omega,\sigma_{p/s}]_{A_{p/s,p/s}^1}\lesssim[\omega,\sigma_p]_{A_{pp}^1}^s=[\omega]_{A_p}^{s/p}$. With a similar estimate for the adjoint $T_\Omega^*:L^{p'}_{\sigma_p}\to L^{p'}_{\sigma_p}$, this reproduces the bound
 \begin{equation*}
  \|T_\Omega\|_{L^p_\omega\to L^p_\omega}
  \lesssim [\omega]_{A_p}^{1/p}([\omega]_{A_\infty}^{1/p'}+[\sigma_p]_{A_\infty}^{1/p})\min([\omega]_{A_\infty},[\sigma_p]_{A_\infty})\lesssim [\omega]_{A_p}^{p'}
\end{equation*}
from \cite{LPRR}, an elaboration of earlier results for the same operators by \cite{CCDO,HRT17}. (Since we only reconsider known results here, we leave the details for the reader.)
	 \end{itemize}

Certainly, there are also important developments not covered by Theorem \ref{thm:main}. We have used the $A_\infty$ assumption on the weights to bootstrap the generally insufficient two-weight condition $[\omega,\sigma]_{A_{pq}^\alpha(\mathcal S)}<\infty$. Other prominent assumptions found in the literature include:
\begin{itemize}
  \item Orlicz norm bumps, studied among others in \cite{CMP12,Ler13,CruMoe13a,CruMoe13,Cruz17}, with early history in \cite{Neug,Per94b,Per94a};
  \item testing conditions, pioneered in \cite{Saw82,Saw88}, with a recent culmination in \cite{LSSU,LacII}; and
  \item entropy bumps, recently introduced in  \cite{TV:entropy} and studied in \cite{LS:entropy,RS:entropy}.
\end{itemize}
However, an in-depth discussion of any of these topics would take us too far afield. (We will use certain testing conditions as a tool, though.)

While some parameter combinations seem to be new in Theorem \ref{thm:main}, we do not insist too much on this. Our main contribution is the unified approach that covers all these cases at once, without being much longer than the proofs of the existing special cases. On the level of technical details, we build on the previous approach of \cite{HytLi15} to the case $p=q$ and $\alpha=1$. In the particular case of \emph{fractional square functions} (corresponding to $r=2$ and $\alpha\in(0,1)$) we also prove,  in Section~\ref{sec:fractional_square_function}, the sharpness of our estimate for $1/p-1/q=1-\alpha$. The sharpness seems to be new for this class of operators, although the bound itself was already obtained in \cite{ZK}.
	 	
		In the next section we introduce the necessary background material. Afterwards we prove step by step sharp weighted estimates for sparse operators and conclude with the sharpness in the fractional square function case in the final section.

\subsection*{Acknowledgement}
We would like to thank Pavel Zorin-Kranich and an anonymous referee for their friendly suggestions that eliminated several serious omissions in our original overview of related works.

	\section{Preliminaries}
	
		The definition in~\eqref{eq:sparse_operator} is given for sparse families $\mathcal{S}$ of dyadic cubes. Let us be precise about these notions. The \emph{standard dyadic grid} in $\IR^n$ is the collection $\mathcal{D}$ of cubes $\{ 2^{-j} ([0,1)^n + m): j \in \IZ, m \in \IZ^n \}$. For us, a dyadic grid is any family of cubes with similar nestedness and covering properties. Such systems of cubes may be parametrised by  $(\omega_k)_{k \in \IZ} \in (\{ 0, 1 \}^n)^{\IZ}$ as
	\begin{equation*}
		\mathcal{D}^{\omega} \coloneqq \Bigl\{ Q + \sum_{j: 2^{-j} < \ell(Q)} 2^{-j} \omega_j : Q \in \mathcal{D} \Bigr\},
	\end{equation*}
	but we will not need to make use of this explicit representation.
	
	\begin{definition}
		A collection $\mathcal{S}$ of dyadic cubes in $\IR^n$ is called \emph{sparse} if for some $\eta > 0$ there exist pairwise disjoint $(E_Q)_{Q \in \mathcal{S}}$ such that for every $Q \in \mathcal{S}$ the set $E_Q$ is a measurable subset of $Q$ with $\abs{E_Q} \ge \eta \abs{Q}$.
	\end{definition}
	
	A \emph{weight} $\omega$ on $\IR^n$ is a locally integrable function $\omega\colon \IR^n \to \IR_{\ge 0}$. The class of all \emph{$A_{\infty}$-weights} consists of all weights $\omega$ for which their \emph{$A_{\infty}$-characteristic}
	\begin{equation*}
		[\omega]_{A_{\infty}(\IR^n)} \coloneqq \sup_{Q} \frac{1}{\omega(Q)} \int_Q M( \mathds{1}_Q \omega)
	\end{equation*}
	is finite, where $(Mf)(x) \coloneqq \sup_{Q \ni x} \abs{Q}^{-1} \int_Q \abs{f}$ is the Hardy--Littlewood maximal function and where both suprema run over cubes of positive and finite diameter whose sides are parallel to the coordinate axes. 
	
	We introduce some convenient notation. For a positive Borel measure $\sigma\colon \mathcal{B}(\IR^n) \to \IR_{\ge 0}$ with $\sigma(O) > 0$ for all non-empty open subsets $O \subset \IR^n$ and a locally integrable function $f\colon \IR^n \to \IR$ we use the abbreviation $\langle f \rangle_{Q}^{\sigma} = \sigma(Q)^{-1} \int_Q f \d\sigma$ for the mean of $f$ over $Q$ with respect to $\sigma$.

We conclude this section with some remarks about our notion of $A_{pq}^\alpha$:
	
	\begin{remark}
		The usual two-weight $A_p$-characteristic is defined as $[\omega, \sigma]_{A_p} = \sup_Q \abs{Q}^{-p} \omega(Q) \sigma(Q)^{p-1}$. Hence, the relationship to the characteristic used in Theorem~\ref{thm:main} is $[\omega, \sigma]_{A_{pp}^{1}} = [\omega, \sigma]_{A_p}^{1/p}$.
	\end{remark}
	
	Only a limited range of parameters contains non-trivial pairs of weights:
		
	\begin{remark}\label{rem:characteristic}
		The class of weights $\omega, \sigma\colon \IR^n \to \IR_{\ge 0}$ satisfying $[\omega, \sigma]_{A_{pq}^{\alpha}} < \infty$ is only non-empty if $-\alpha + \frac{1}{q} + \frac{1}{p'} \ge 0$. In the diagonal case $p = q$, which maximizes the left hand side because of our standing assumption $p \le q$, this holds if and only if $\alpha \le 1$. Hence, we only get examples for $\alpha \in (0,1]$ and, if $\alpha > \frac{1}{p'}$, for $q \in [p,\frac{p}{p(\alpha-1) + 1}]$. In particular, for $\alpha = 1$ we necessarily are in the diagonal case $p = q$. All this follows from the Lebesgue differentiation theorem by considering cubes centered at and shrinking to a fixed point in $\IR^n$ and the following identity:
		\begin{equation*}
			\abs{Q}^{-\alpha} \biggl( \int_Q \omega \biggr)^{1/q} \biggl( \int_Q \sigma \biggr)^{1/p'} = \abs{Q}^{-\alpha + \frac{1}{q} + \frac{1}{p'}} \biggl( \frac{1}{\abs{Q}} \int_Q \omega \biggr)^{1/q} \biggl( \frac{1}{\abs{Q}} \int_Q \sigma \biggr)^{1/p'}.
		\end{equation*}
		Note that $[\mathds{1}, \mathds{1}]_{A_{pq}^{\alpha}} < \infty$ if and only if $-\alpha + \frac{1}{q} + \frac{1}{p'} = 0$, or equivalently, $\alpha = 1 + \frac{1}{q} - \frac{1}{p}$. Further, in the case $-\alpha + \frac{1}{q} + \frac{1}{p'} > 0$ one can verify that there exist $(\beta, \gamma) \in (0,n) \times (0,n)$ such that $\omega(t) = \abs{t}^{-\beta}$ and $\sigma(t) = \abs{t}^{-\gamma}$ satisfy $[\omega, \sigma]_{A_{pq}^{\alpha}} < \infty$.
	\end{remark}

	\section{Testing Constant Type Estimates for the Operator Norm}
	
	As a first central step we prove the following estimate for the operator norm of $A_{\mathcal{S}}^{r,\alpha}$ between two weighted $L^p$-spaces in terms of two testing constants.
	
	\begin{proposition}\label{prop:testing}
		Let $1 < p \le q < \infty$, $r \in (0, \infty)$, $\alpha \in (0,1]$, $\mathcal{S}$ a sparse collection of dyadic cubes and let $\omega, \sigma\colon \IR^n \to \IR_{\ge 0}$ be weights. Define the testing constants
		\begin{align*}
			\mathcal{T} & = \sup_{R \in \mathcal{S}} \sigma(R)^{-r/p} \bignorm{\sum_{Q \in \mathcal{S}: Q \subset R} \abs{Q}^{-\alpha r} \sigma(Q)^r \mathds{1}_Q}_{L^{q/r}_{\omega}}, \\
			\mathcal{T}^* & = \sup_{R \in \mathcal{S}} \omega(R)^{-1/(q/r)'} \bignorm{\sum_{Q \in \mathcal{S}: Q \subset R} \abs{Q}^{-\alpha r} \sigma(Q)^{r-1} \omega(Q) \mathds{1}_Q}_{L^{(p/r)'}_{\sigma}}.
		\end{align*}
		Then
		\begin{equation*}
			\norm{A^{r,\alpha}_{\mathcal{S}}(\sigma \cdot)}_{L^p_{\sigma} \to L^q_{\omega}}^r \lesssim
			\begin{cases}
				\mathcal{T} + \mathcal{T}^* & \text{if } r < p, \\
				\mathcal{T} & \text{if } r \ge p.
			\end{cases}
		\end{equation*}
	\end{proposition}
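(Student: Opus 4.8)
The plan is to reduce the operator norm to a dual bilinear sum, run a corona (stopping‑time) decomposition of $f$ with respect to $\sigma$, estimate the localized pieces by the testing constants, and sum over the stopping cubes via the Carleson embedding theorem; this is a Sawyer‑type testing argument, organized as in \cite{HytLi15}. We may assume $f\ge 0$. Since $\norm{A^{r,\alpha}_{\mathcal{S}}(f\sigma)}_{L^q_\omega}^r=\bignorm{\sum_{Q\in\mathcal{S}}\abs{Q}^{-\alpha r}\sigma(Q)^r(\langle f\rangle_Q^\sigma)^r\mathds{1}_Q}_{L^{q/r}_\omega}$, when $q>r$ — in particular whenever $r<p$, since then $r<p\le q$ — we dualize against $h\ge 0$ with $\norm{h}_{L^{(q/r)'}_\omega}\le 1$ and must bound $\sum_{Q\in\mathcal{S}}\abs{Q}^{-\alpha r}\sigma(Q)^r(\langle f\rangle_Q^\sigma)^r\int_Q h\,\d\omega$; when $q\le r$ (which forces $r\ge p$) we instead use the $q/r$‑subadditivity $\bignorm{\sum_k g_k}_{L^{q/r}_\omega}^{q/r}\le\sum_k\norm{g_k}_{L^{q/r}_\omega}^{q/r}$ for $g_k\ge 0$.

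Let $\mathcal{F}\subseteq\mathcal{S}$ be the principal cubes of $f$ relative to $\sigma$: all maximal cubes of $\mathcal{S}$ lie in $\mathcal{F}$, and the $\mathcal{F}$‑children of $F\in\mathcal{F}$ are the maximal $Q\in\mathcal{S}$, $Q\subsetneq F$, with $\langle f\rangle_Q^\sigma>2\langle f\rangle_F^\sigma$. Then $\langle f\rangle_Q^\sigma\le 2\langle f\rangle_{\pi_{\mathcal{F}}(Q)}^\sigma$ for every $Q\in\mathcal{S}$ (writing $\pi_{\mathcal{F}}(Q)$ for the smallest element of $\mathcal{F}$ containing $Q$), and $\mathcal{F}$ is $\sigma$‑Carleson, so the Carleson embedding theorem gives $\sum_{F\in\mathcal{F}}(\langle f\rangle_F^\sigma)^p\sigma(F)\lesssim\norm{f}_{L^p_\sigma}^p$. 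Hence $\sum_{Q\in\mathcal{S}}\abs{Q}^{-\alpha r}\sigma(Q)^r(\langle f\rangle_Q^\sigma)^r\mathds{1}_Q\le 2^r\sum_{F\in\mathcal{F}}(\langle f\rangle_F^\sigma)^r G_F$ with $G_F:=\sum_{Q\in\mathcal{S}:\pi_{\mathcal{F}}(Q)=F}\abs{Q}^{-\alpha r}\sigma(Q)^r\mathds{1}_Q$ supported in $F$ and, being dominated by the function tested in the definition of $\mathcal{T}$, satisfying $\norm{G_F}_{L^{q/r}_\omega}\le\mathcal{T}\,\sigma(F)^{r/p}$. When $r\ge p$ we are done with $\mathcal{T}$ alone — and indeed $\mathcal{T}^*$ is not even defined in this range: using $r/p\ge 1$, $q/p\ge 1$, the triangle inequality (resp.\ $q/r$‑subadditivity) in $L^{q/r}_\omega$, together with $\norm{G_F}_{L^{q/r}_\omega}\le\mathcal{T}\sigma(F)^{r/p}$ and $\sum_F t_F^\theta\le(\sum_F t_F)^\theta$ for $t_F\ge0$, $\theta\ge1$ (with $\theta=r/p$, resp.\ $\theta=q/p$), yields $\norm{A^{r,\alpha}_{\mathcal{S}}(f\sigma)}_{L^q_\omega}^r\lesssim\mathcal{T}\bigl(\sum_{F\in\mathcal{F}}(\langle f\rangle_F^\sigma)^p\sigma(F)\bigr)^{r/p}\lesssim\mathcal{T}\norm{f}_{L^p_\sigma}^r$.

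When $r<p$ one dualizes as above and additionally decomposes $h$ into principal cubes $\mathcal{G}$ relative to $\omega$, with $\langle h\rangle_Q^\omega\le 2\langle h\rangle_{\pi_{\mathcal{G}}(Q)}^\omega$ and the analogous $\omega$‑Carleson and embedding properties. Feeding both corona estimates into the bilinear sum, one is led to a double sum over $\mathcal{F}\times\mathcal{G}$ of $(\langle f\rangle_F^\sigma)^r\langle h\rangle_G^\omega$ times $\sum_{Q\in\mathcal{S}:\pi_{\mathcal{F}}(Q)=F,\ \pi_{\mathcal{G}}(Q)=G}\abs{Q}^{-\alpha r}\sigma(Q)^r\omega(Q)$, in which only comparable pairs $F,G$ contribute, and one splits it according to whether $G\subseteq F$ or $F\subsetneq G$. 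The inner sum is estimated via the $\mathcal{T}$‑testing of $\sum_{Q\in\mathcal{S}:Q\subseteq\cdot}\abs{Q}^{-\alpha r}\sigma(Q)^r\mathds{1}_Q$ on one part and via the $\mathcal{T}^*$‑testing of $\sum_{Q\in\mathcal{S}:Q\subseteq\cdot}\abs{Q}^{-\alpha r}\sigma(Q)^{r-1}\omega(Q)\mathds{1}_Q$ on the other; here one exploits $\abs{Q}^{-\alpha r}\sigma(Q)^r\omega(Q)=\sigma(Q)\cdot\abs{Q}^{-\alpha r}\sigma(Q)^{r-1}\omega(Q)$, so that a factor $\sigma(Q)=\int_Q\d\sigma$ is absorbed into an $L^{p/r}_\sigma$‑pairing and the dual testing quantity enters. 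The residual averages of $f$ and $h$ over the stopping cubes are then disposed of by Hölder's inequality over the stopping cubes together with the Carleson embeddings of $f$ w.r.t.\ $\sigma$ and of $h$ w.r.t.\ $\omega$; the hypothesis $p\le q$ enters precisely through the inequalities $q/p\ge 1$ and $(p/r)'\ge(q/r)'$, which make the intermediate $\ell^\theta$‑inclusions point the right way. Adding the two contributions gives the bound by $\mathcal{T}+\mathcal{T}^*$.

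The delicate step is the combinatorial bookkeeping in the last case. After pushing one of the two averages up to its stopping cube and invoking the matching testing constant, one must organize the residual double sum over the two nested families $\mathcal{F}$ and $\mathcal{G}$ so that it still telescopes under the Carleson embedding — crucially, without ever being forced to sum $\sigma$‑measures over an $\omega$‑Carleson family (or conversely), since neither family controls the other's measure. This is exactly the technical core of the Sawyer‑type testing theorems for positive dyadic operators, and it is carried out following the argument of \cite{HytLi15}.
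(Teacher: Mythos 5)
Your $r\ge p$ argument is essentially the paper's Lemma~\ref{lem:reduction_principal_cubes}: corona decomposition of $f$ relative to $\sigma$, a switch to an $\ell^p$-sum over the principal cubes using $r\ge p$, and the $\sigma$-Carleson embedding to close. For $r<p$ you take a genuinely different route. The paper linearizes first: Lemma~\ref{lem:linearize_square_function} replaces $\bigl(\int_Q f\sigma\bigr)^r$ by $\sigma(Q)^{r-1}\abs{Q}\langle g\sigma\rangle_Q$, turning $A^{r,\alpha}_{\mathcal S}$ into a \emph{linear} positive dyadic operator $L^{p/r}_\sigma\to L^{q/r}_\omega$, and then cites the two-weight characterization of Lacey--Sawyer--Uriarte-Tuero (Lemma~\ref{lem:model_estimate}) as a black box, reading off its two testing constants as $\mathcal T$ and $\mathcal T^*$. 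You instead dualize the $L^{q/r}_\omega$-norm directly and sketch the double-corona Sawyer argument from scratch: principal cubes for $f$ w.r.t.\ $\sigma$ and for the dual function $h$ w.r.t.\ $\omega$, the split $G\subseteq F$ versus $F\subsetneq G$, one testing constant on each side, and the two Carleson embeddings. Both routes are sound in principle, and yours has one concrete advantage worth flagging: it avoids the Jensen step $\bigl(\int_Q f\sigma\bigr)^r\le\sigma(Q)^{r-1}\int_Q f^r\,\d\sigma$ that underlies Lemma~\ref{lem:linearize_square_function}; this inequality reverses for $r<1$, and that lemma is in fact stated only for $r>1$, so the paper's appeal to it for the whole range $0<r<p$ leaves a small lacuna at $0<r\le 1$ that your dualization sidesteps. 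The price is that the combinatorial core you defer to \cite{HytLi15} --- organizing the two stopping families so that the exponent $r$ on $\langle f\rangle_F^\sigma$ still meshes with the exponent-$p$ Carleson embedding, without ever summing $\sigma$-mass over an $\omega$-Carleson family or vice versa --- is precisely the nontrivial content that the paper offloads, once and for all, to \cite{LacSawUri10}.
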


	As preparatory steps for the proof of the above result we show some intermediate results. In a first step we reduce the norm estimate for $A^{r,\alpha}_{\mathcal{S}}$ to an estimate for a linear operator. This reduction does not make use of the concrete form of $A^{r,\alpha}_{\mathcal{S}}$. We therefore formulate it in a more general setting.
	
	\begin{lemma}\label{lem:linearize_square_function}
		Let $1 < r < p \le q < \infty$, $\mathcal{C}$ a collection of dyadic cubes and $\omega, \sigma\colon \IR^n \to \IR_{\ge 0}$ be weights. Then for non-negative real numbers $(c_Q)_{Q \in \mathcal{C}}$, the expressions
		\begin{align*}
			I:=&\sup_{f \ge 0, \norm{f}_{L^p_{\sigma}} \le 1} \biggnorm{\sum_{Q \in \mathcal{C}} c_Q \biggl(\int_Q f \sigma \biggr)^r \mathds{1}_Q}_{L^{q/r}_{\omega}},\qquad\text{and} \\
			II:=& \sup_{g \ge 0, \norm{g}_{L^{p/r}_{\sigma}} \le 1} \biggnorm{\sum_{Q \in \mathcal{C}} c_Q \sigma(Q)^{r-1} \abs{Q} \langle g \sigma \rangle_Q \mathds{1}_Q}_{L^{q/r}_{\omega}}
		\end{align*}
		are comparable with constants independent of the concrete choice of $(c_Q)$, $\omega$ and $\sigma$.
	\end{lemma}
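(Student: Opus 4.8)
The plan is to establish the two inequalities $I\le II$ and $II\lesssim I$ separately, with implied constants depending only on $p$ and $r$ (in particular independent of $(c_Q)$, $\omega$, $\sigma$). A preliminary reduction: for fixed nonnegative $f$ (resp.\ $g$) the function inside the $L^{q/r}_\omega$-norm defining $I$ (resp.\ $II$) is the pointwise increasing limit of its partial sums over finite subcollections of $\mathcal{C}$, so by monotone convergence both $I$ and $II$ are suprema of the corresponding quantities over finite subcollections; hence one may assume throughout that $\mathcal{C}$ is finite. For $I\le II$, given $f\ge 0$ with $\norm{f}_{L^p_\sigma}\le 1$ I would take $g:=f^r$, so that $\norm{g}_{L^{p/r}_\sigma}=\norm{f}_{L^p_\sigma}^r\le 1$; Hölder's inequality with exponents $r,r'$ (the point where $r>1$ is used) gives $\int_Q f\sigma\le\bigl(\int_Q f^r\sigma\bigr)^{1/r}\sigma(Q)^{1/r'}$, hence the termwise bound $\bigl(\int_Q f\sigma\bigr)^r\le\sigma(Q)^{r-1}\int_Q g\sigma=\sigma(Q)^{r-1}\abs{Q}\langle g\sigma\rangle_Q$, and summing in $Q$ and taking $L^{q/r}_\omega$-norms yields $I\le II$.

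The substance is in the reverse inequality. Fix $g\ge 0$ with $\norm{g}_{L^{p/r}_\sigma}\le 1$ and run a stopping-time (principal cube) construction relative to the $\sigma$-averages of $g$: let $\mathcal{G}$ consist of the maximal cubes of $\mathcal{C}$ together with, recursively, for each already selected $R$, the maximal $Q\in\mathcal{C}$ with $Q\subsetneq R$ and $\langle g\rangle_Q^\sigma>2\langle g\rangle_R^\sigma$. Writing $\pi(Q)$ for the smallest cube of $\mathcal{G}$ containing $Q\in\mathcal{C}$ and $E_R:=R\setminus\bigcup\{R'\in\mathcal{G}:R'\subsetneq R\}$, one has the usual properties: (i) $\langle g\rangle_Q^\sigma\le 2\langle g\rangle_{\pi(Q)}^\sigma$ for every $Q\in\mathcal{C}$; (ii) the $\sigma$-average of $g$ does not decrease when passing to $\mathcal{G}$-descendants (each step down the $\mathcal{G}$-tree at least doubles it); (iii) the sets $E_R$ are pairwise disjoint and partition each cube of $\mathcal{G}$. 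I would then test with $f:=\sum_{R\in\mathcal{G}}(\langle g\rangle_R^\sigma)^{1/r}\mathds{1}_{E_R}$. On $E_R$ one has $\langle g\rangle_R^\sigma\le M^\sigma_{\mathcal{D}}g$, where $M^\sigma_{\mathcal{D}}$ is the dyadic maximal operator, relative to $\sigma$, of a grid containing $\mathcal{C}$; hence $f\le(M^\sigma_{\mathcal{D}}g)^{1/r}$ pointwise, and since $M^\sigma_{\mathcal{D}}$ is bounded on $L^{p/r}_\sigma$ (this is where $p/r>1$, i.e. $r<p$, enters) we get $\norm{f}_{L^p_\sigma}^p\le\norm{M^\sigma_{\mathcal{D}}g}_{L^{p/r}_\sigma}^{p/r}\lesssim\norm{g}_{L^{p/r}_\sigma}^{p/r}\le 1$.

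The crux is the matching lower bound on $\int_Q f\sigma$. For $Q\in\mathcal{C}$ and $x\in Q$, let $R\in\mathcal{G}$ be the unique cube with $x\in E_R$ (by (iii)); since $R$ and $\pi(Q)$ are dyadic cubes both containing $x$ they are nested, and $\pi(Q)\subsetneq R$ is impossible because it would place $x$ in a cube of $\mathcal{G}$ strictly inside $R$, so $R\subseteq\pi(Q)$, and then (ii) gives $f(x)=(\langle g\rangle_R^\sigma)^{1/r}\ge(\langle g\rangle_{\pi(Q)}^\sigma)^{1/r}$. Consequently, using (i), $\bigl(\int_Q f\sigma\bigr)^r\ge\langle g\rangle_{\pi(Q)}^\sigma\,\sigma(Q)^r\ge\tfrac12\langle g\rangle_Q^\sigma\,\sigma(Q)^r=\tfrac12\sigma(Q)^{r-1}\abs{Q}\langle g\sigma\rangle_Q$, so that $\sum_Q c_Q\sigma(Q)^{r-1}\abs{Q}\langle g\sigma\rangle_Q\mathds{1}_Q\le 2\sum_Q c_Q\bigl(\int_Q f\sigma\bigr)^r\mathds{1}_Q$ pointwise; taking $L^{q/r}_\omega$-norms, using $\norm{f}_{L^p_\sigma}\lesssim 1$, and then the supremum over admissible $g$ gives $II\lesssim I$. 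I expect the main obstacle to be precisely this construction of the test function: recognising that the function assembled from the principal cubes of $g$ dominates $(\langle g\rangle_{\pi(Q)}^\sigma)^{1/r}$ on \emph{all} of $Q$, not merely on the stopping set $E_{\pi(Q)}$, is what converts the stopping-time information into the required termwise domination; the remaining ingredients — Hölder's inequality, the $L^s_\sigma$-boundedness of the dyadic maximal operator for $s>1$, and the monotone-convergence reduction — are routine.
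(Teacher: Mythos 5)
Your proof is correct, and the first half ($I\le II$ via H\"older with $g=f^r$) is identical to the paper's. For $II\lesssim I$, however, you take a noticeably heavier route than the paper does. The paper's argument is a one-liner: given $g$, test directly with $f=(M_\sigma g)^{1/r}$. Since $M_\sigma g\ge\langle g\rangle_Q^\sigma$ pointwise on $Q$, one has $\int_Q f\sigma\ge(\langle g\rangle_Q^\sigma)^{1/r}\sigma(Q)$, hence $(\int_Q f\sigma)^r\ge\sigma(Q)^{r-1}\int_Q g\sigma=\sigma(Q)^{r-1}\abs{Q}\langle g\sigma\rangle_Q$ with constant $1$, and $\norm{f}_{L^p_\sigma}=\norm{M_\sigma g}_{L^{p/r}_\sigma}^{1/r}\lesssim\norm{g}_{L^{p/r}_\sigma}^{1/r}\le 1$ by the boundedness of $M_\sigma$ on $L^{p/r}_\sigma$ for $p/r>1$. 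Your principal-cube construction produces a test function $f\le(M_\sigma^\mathcal{D}g)^{1/r}$ satisfying the slightly weaker termwise bound with factor $\tfrac12$ — it works, and the step you flag as the crux (that the assembled $f$ dominates $(\langle g\rangle_{\pi(Q)}^\sigma)^{1/r}$ on all of $Q$) is argued correctly via nestedness, the disjointness of the $E_R$, and the doubling of $\sigma$-averages down the stopping tree. But this machinery buys nothing here: the bare maximal function already does the job, so the stopping-time decomposition (and the accompanying reduction to finite $\mathcal{C}$, needed to guarantee that the $E_R$ exhaust the maximal cubes) is superfluous. It is worth noticing that your argument rests on exactly the same analytic input as the paper's — $L^{p/r}_\sigma$-boundedness of the $\sigma$-maximal operator — merely filtered through the stopping cubes.
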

	
	\begin{proof}
	This follows from the estimates
	\begin{equation*}
\begin{split}
   \Big(\int_Q f\sigma\Big)^r
   &\leq\sigma(Q)^{r-1}\int_Q f^r\sigma
   =\sigma(Q)^{r-1}\abs{Q}\langle f^r\sigma\rangle_Q \\
   &=\sigma(Q)^r\frac{1}{\sigma(Q)}\int_Q f^r\sigma 
   \leq \sigma(Q)^r(\inf_{x\in Q}M_{\sigma,r} f)^r,\quad M_{\sigma,r}f:= (M_{\sigma}\abs{f}^r)^{1/r}, \\
   &\leq \Big(\int_Q (M_{\sigma,r} f) \sigma\Big)^r,
\end{split}
\end{equation*}
observing that $\norm{f^r}_{L^{p/r}_\sigma}^{1/r}=\norm{f}_{L^p_\sigma}\eqsim\norm{M_{\sigma,r} f}_{L^p_\sigma}$ when $p>r$.
	\end{proof}
	
	For the complement parameter range for $r$, a second argument for the domination of the operator norm by the testing constants is needed. This can again be proven in a more general context.
	
	\begin{lemma}\label{lem:reduction_principal_cubes}
		Let $1 < p \le 	q < \infty$, $r \in [p, \infty)$, $\mathcal{S}$ a sparse collection of dyadic cubes and let $\omega, \sigma\colon \IR^n \to \IR_{\ge 0}$ be weights. For non-negative real numbers $(c_Q)_{Q \in \mathcal{\mathcal{S}}}$ and measurable $f\colon \IR^n \to \IR_{\ge 0}$ we have
			\begin{align*}
				\biggnorm{\sum_{Q \in \mathcal{S}} c_Q \biggl( \int_Q f \sigma \biggr)^{r} \mathds{1}_Q}_{L^{q/r}_{\omega}} \lesssim \sup_{R \in \mathcal{S}} \sigma(R)^{-r/p} \bignorm{\sum_{Q \in \mathcal{S}: Q \subset R} c_Q \sigma(Q)^r \mathds{1}_Q}_{L^{q/r}_{\omega}} \cdot \norm{f}^r_{L^p_{\sigma}}.
			\end{align*}
	\end{lemma}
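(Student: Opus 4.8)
The plan is to reduce the full sum over the sparse family $\mathcal S$ to sums over the ``stopping'' (or principal) subfamilies associated to $f$, and then to decompose each such piece into a supremum of the type appearing on the right-hand side. First I would fix $f$, and—after normalising $\norm{f}_{L^p_\sigma}=1$—introduce the principal cubes adapted to the averages $\langle f\sigma\rangle_Q^\sigma = \sigma(Q)^{-1}\int_Q f\sigma$: starting from the maximal cubes of $\mathcal S$, at each stage one selects the maximal subcubes $Q'\subset Q$ of a principal cube $Q$ with $\langle f\rangle_{Q'}^\sigma > 2\langle f\rangle_Q^\sigma$ (or whatever lacunarity constant is convenient), and calls $\mathcal F$ the resulting collection of principal cubes. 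For $Q\in\mathcal S$ let $\pi(Q)$ be the smallest principal cube containing $Q$; then by construction $\int_Q f\sigma = \sigma(Q)\langle f\rangle_Q^\sigma \le 2\,\sigma(Q)\langle f\rangle_{\pi(Q)}^\sigma$. Since $r\ge 1$, this gives the pointwise bound
\begin{equation*}
  \sum_{Q\in\mathcal S} c_Q\Bigl(\int_Q f\sigma\Bigr)^r\mathds 1_Q
  \le 2^r \sum_{F\in\mathcal F}\bigl(\langle f\rangle_F^\sigma\bigr)^r \sum_{Q: \pi(Q)=F} c_Q\,\sigma(Q)^r\,\mathds 1_Q.
\end{equation*}

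Next I would exploit that, since $r\ge p$, the exponent $q/r$ on the outer $L^{q/r}_\omega$ norm satisfies $q/r\le q/p\le$ (well, more to the point $q/r$ can be $\le 1$), so the map $u\mapsto u^{q/r}$ on the inner sum over $F$ can be handled by super-additivity of $t\mapsto t^{q/r}$ when $q/r\le 1$, or by a straightforward triangle-type estimate when $q/r\ge 1$ after splitting; in either case the essential point is that the sets $E_F$ (or the cubes $F$ themselves together with the goodness/disjointness coming from the principal structure) allow one to write
\begin{equation*}
  \biggnorm{\sum_{Q\in\mathcal S} c_Q\Bigl(\int_Q f\sigma\Bigr)^r\mathds 1_Q}_{L^{q/r}_\omega}^{q/r}
  \lesssim \sum_{F\in\mathcal F}\bigl(\langle f\rangle_F^\sigma\bigr)^q \cdot \sigma(F)^{-qr/(pr)}\cdot\Bigl(\sup_{R\in\mathcal S}\sigma(R)^{-r/p}\bignorm{\sum_{Q\subset R} c_Q\sigma(Q)^r\mathds 1_Q}_{L^{q/r}_\omega}\Bigr)^{q/r}\sigma(F)^{q/p}.
\end{equation*}
Here the per-$F$ factor $\sigma(F)^{-r/p}\,\norm{\sum_{Q:\pi(Q)=F}\cdots}_{L^{q/r}_\omega}$ is dominated by the claimed supremum (with $R=F$), leaving $\sum_F (\langle f\rangle_F^\sigma)^q\,\sigma(F)^{q/p}$. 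Since $q\ge p$, $\ell^p\hookrightarrow\ell^q$ gives $\sum_F(\langle f\rangle_F^\sigma)^q\sigma(F)^{q/p}\le\bigl(\sum_F(\langle f\rangle_F^\sigma)^p\sigma(F)\bigr)^{q/p}$, so it remains to bound $\sum_{F\in\mathcal F}(\langle f\rangle_F^\sigma)^p\sigma(F)$.

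Finally, this last sum is the classical Carleson-embedding / stopping-time estimate: the lacunarity of the principal cubes forces $\sum_{F\in\mathcal F}(\langle f\rangle_F^\sigma)^p\sigma(F)\lesssim\int_{\IR^n}(M_\sigma f)^p\d\sigma\lesssim\norm{f}_{L^p_\sigma}^p = 1$, using the weighted Hardy–Littlewood maximal theorem on $L^p(\sigma)$ (for $p>1$). Raising to the power $r/q$ and recalling the normalisation restores the factor $\norm{f}_{L^p_\sigma}^r$ in the general case. I expect the main obstacle to be the bookkeeping in the second step: when $q/r\ge 1$ one cannot simply sum $L^{q/r}_\omega$-norms of the pieces indexed by $F$ without loss, so one must either invoke disjointness of the sets where each principal block is ``active'' up to bounded overlap, or pass through the Carleson measure estimate directly at the level of the full sum (estimating $\sum_{Q\in\mathcal S}c_Q(\int_Q f\sigma)^r\mathds 1_Q$ pointwise by the principal data before taking any norm). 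Everything else is the standard sparse/principal-cube machinery together with $\ell^p\hookrightarrow\ell^q$ and the weighted maximal inequality.
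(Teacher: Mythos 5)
Your skeleton matches the paper's: principal/stopping cubes for $f$ with respect to $\sigma$, the pointwise replacement of $\langle f\rangle_Q^\sigma$ by $2\langle f\rangle_{\pi(Q)}^\sigma$, domination of each block indexed by $F\in\mathcal F$ by the testing supremum (taking $R=F$), and the closing Carleson/stopping-time bound $\sum_{F}(\langle f\rangle_F^\sigma)^p\sigma(F)\lesssim\norm{f}_{L^p_\sigma}^p$ via the $L^p(\sigma)$-boundedness of $M_\sigma$. Those are all correct and are exactly what the paper does.

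Where you have a genuine gap is the exponent bookkeeping you flagged yourself: you propose to sum over $F$ at the level of $L^{q/r}_\omega$-norms, which splits into two cases, $q/r\le 1$ (subadditivity of $t\mapsto t^{q/r}$) and $q/r\ge1$ (triangle inequality), and your displayed estimate only really corresponds to the first case. You never carry out the second case, and in fact the ``$\ell^p\hookrightarrow\ell^q$ at exponent $q$'' step you invoke afterwards does not fit that branch — there you would instead need to apply $\ell^p\hookrightarrow\ell^r$ at exponent $r$ to the quantity $\sum_F(\langle f\rangle_F^\sigma)^r\sigma(F)^{r/p}$. The paper sidesteps this case split with a cleaner two-step exponent argument: first, since $p\le r$, use the pointwise embedding $\ell^p\hookrightarrow\ell^r$ on the $F$-indexed sum to replace
\begin{equation*}
\Bigl(\sum_F(\langle f\rangle_F^\sigma)^r\,B_F\Bigr)^{1/r}\le\Bigl(\sum_F(\langle f\rangle_F^\sigma)^p\,B_F^{p/r}\Bigr)^{1/p},\quad B_F:=\sum_{Q:\pi(Q)=F}c_Q\sigma(Q)^r\mathds 1_Q,
\end{equation*}
before taking any $L^q_\omega$-norm; second, since $p\le q$ the exponent $q/p$ is always $\ge1$, so Minkowski's inequality in $L^{q/p}_\omega$ applies uniformly and puts each $F$-block inside its own norm. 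After pulling out the testing supremum, one is left precisely with $(\sum_F(\langle f\rangle_F^\sigma)^p\sigma(F))^{r/p}$, with no case split. So the hypotheses $p\le r$ and $p\le q$ are used in two successive, distinct ways, which is the piece of the argument your plan does not pin down. With that correction your approach reduces to the paper's proof.
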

	\begin{proof}
		It suffices to show the estimate for sparse collections $\mathcal{S}$ of cubes whose side lengths are bounded from above by a fixed constant. In the following we use the principal cubes associated to $f$ and $\sigma$: consider $\mathcal{F} = \cup_{k=0}^{\infty} \mathcal{F}_k$ for $\mathcal{F}_0 = \{ \text{maximal cubes in } \mathcal{S} \}$ and $\mathcal{F}_{k+1} \coloneqq \cup_{F \in \mathcal{F}_k} \operatorname{ch}_{\mathcal{F}}(F)$, where $\operatorname{ch}_{\mathcal{F}}(F) \coloneqq \{ \mathcal{S} \ni Q \subsetneq F \text{ maximal with } \langle f \rangle_Q^{\sigma} > 2 \langle f \rangle_F^{\sigma} \}$. Further, for $Q \in \mathcal{S}$ we write $\pi(Q)$ for the minimal cube in $\mathcal{F}$ containing $Q$. Using the principal cubes, we obtain
		\begin{align*}
				\MoveEqLeft \biggnorm{\biggl( \sum_{Q \in \mathcal{S}} c_Q \biggl( \int_Q f \sigma \biggr)^{r} \mathds{1}_Q \biggr)^{1/r}}_{L^q_{\omega}} \\
				& = \biggnorm{\biggl( \sum_{Q \in \mathcal{S}} c_Q (\langle f \rangle_Q^{\sigma})^r \sigma(Q)^r \mathds{1}_Q \biggr)^{1/r}}_{L^q_{\omega}} \\
				& \le 2 \biggnorm{\biggl( \sum_{F \in \mathcal{F}} (\langle f \rangle_F^{\sigma})^r \sum_{Q: \pi(Q) = F} \sigma(Q)^r c_Q \mathds{1}_Q \biggr)^{1/r}}_{L^q_{\omega}}.
		\end{align*}
		Here we use that for $F = \pi(Q)$ one has $\langle f \rangle_Q^{\sigma} \le 2 \langle f \rangle_F^{\sigma}$. In fact, if $\langle f \rangle_Q^{\sigma} > 2 \langle f \rangle_F^{\sigma}$ would hold, then $F \supsetneq F' \supseteq Q$ for some $F' \in \operatorname{ch}_{\mathcal{F}}(F)$ by the maximality property of $\operatorname{ch}_{\mathcal{F}}(F)$. But then $F' \in \mathcal{F}$ satisfies $F' \supseteq Q$ and is strictly smaller than $F$, which contradicts the minimality of $F = \pi(Q)$.  Because of $p \le r$ the norm on the right hand side is dominated by
		\begin{align*}
			\MoveEqLeft \biggnorm{\biggl( \sum_{F \in \mathcal{F}} (\langle f \rangle_F^{\sigma})^p \bigl( \sum_{Q: \pi(Q) = F} \sigma(Q)^r c_Q \mathds{1}_Q \bigl)^{p/r} \biggr)^{1/p}}_{L^q_{\omega}} \\
			& = \biggnorm{\sum_{F \in \mathcal{F}} (\langle f \rangle_F^{\sigma})^p \Big( \sum_{Q: \pi(Q) = F} \sigma(Q)^r c_Q \mathds{1}_Q \Big)^{p/r}}_{L^{q/p}_{\omega}}^{1/p} \\
			& \le \biggl( \sum_{F \in \mathcal{F}} (\langle f \rangle^{\sigma}_F)^p \biggnorm{\Big( \sum_{Q: \pi(Q) = F} \sigma(Q)^r c_Q \mathds{1}_Q \Big)^{1/r}}_{L^q_{\omega}}^p \biggr)^{1/p}.
		\end{align*}
		Note that in the last step we can use the triangle inequality because of the assumption $p \le q$. Furthermore, the last expression to the power $r$ is dominated by
		\begin{equation*}
			\sup_{R \in \mathcal{S}} \sigma(R)^{-r/p} \bignorm{\sum_{Q \in \mathcal{S}: Q \subset R} c_Q \sigma(Q)^r \mathds{1}_Q}_{L^{q/r}_{\omega}} \cdot \biggl( \sum_{F \in \mathcal{F}} (\langle f \rangle_F^{\sigma})^p \sigma(F) \biggr)^{r/p}.
		\end{equation*}
		We estimate the last factor. Let $x \in \IR^n$ be fixed. If $x$ lies in some cube in $\mathcal{F}_0$, we let $Q_0$ be the unique cube in $\mathcal{F}_0$ that contains $x$. Further, if $x$ lies in some cube in $\operatorname{ch}_{\mathcal{F}}(Q_0)$, we let $Q_1$ be the unique cube in $\operatorname{ch}_{\mathcal{F}}(Q_0)$ with $x \in Q_1$. Continuing inductively, we obtain a possibly finite or even empty chain of cubes $Q_0, Q_1, \ldots$, each of them containing $x$. Let $N \in \IN$ with $x \in Q_N$. Then
		\begin{equation*}
			\sum_{k=0}^{N} (\langle f \rangle_{Q_k}^{\sigma})^p \le \sum_{k=0}^N 2^{-Np} (\langle f \rangle_{Q_N}^{\sigma})^p \lesssim (M_{\sigma} f)^p(x).
		\end{equation*}
		Since $x \in \IR^n$ and $N$ is arbitrary, we get the pointwise domination $\sum_{F \in \mathcal{F}} (\langle f \rangle_{F}^{\sigma})^p \mathds{1}_F \lesssim (M_{\sigma} f)^p$. In particular, we have
		\begin{equation*}
			\sum_{F \in \mathcal{F}} (\langle f \rangle_F^{\sigma})^p \sigma(F) = \int_{\IR^n} \sum_{F \in \mathcal{F}} (\langle f \rangle_F^{\sigma})^p \mathds{1}_F \d \sigma \lesssim \int_{\IR^n} (M_{\sigma} f)^p \d \sigma \lesssim \norm{f}_{L^p(\sigma)}^p. \qedhere
		\end{equation*}
	\end{proof}
	
	As a central blackbox result we need the following norm characterization proved by Lacey, Sawyer and Uriarte-Tuero~\cite[Theorem~1.11]{LacSawUri10}.
	
	\begin{lemma}\label{lem:model_estimate}
		Let $1 < p \le q < \infty$ and let $\omega, \sigma\colon \IR^n \to \IR_{\ge 0}$ be weights. For a collection of dyadic cubes $\mathcal{D}$ and non-negative real $(\tau_{Q})_{Q \in \mathcal{D}}$ consider
			\begin{equation*}
				T(f) = \sum_{Q \in \mathcal{D}} \tau_Q \langle f \rangle_Q \mathds{1}_Q.
			\end{equation*}
		Then 
		\begin{align*}
			\MoveEqLeft \norm{T(\cdot \sigma)}_{L^p_{\sigma} \rightarrow L^q_{\omega}} \simeq \sup_{R \in \mathcal{D}} \omega(R)^{-1/q'} \biggnorm{\sum_{Q \in \mathcal{D}: Q \subset R} \tau_Q \langle \omega \rangle_Q \mathds{1}_Q}_{L^{p'}_{\sigma}} \\
			& + \sup_{R \in \mathcal{D}} \sigma(R)^{-1/p} \biggnorm{\sum_{Q \in \mathcal{D}: Q \subset R} \tau_Q \langle \sigma \rangle_Q \mathds{1}_Q}_{L^q_{\omega}}.
		\end{align*}
	\end{lemma}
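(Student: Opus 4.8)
The statement is the off-diagonal two-weight testing (Sawyer-type) characterization for positive dyadic operators, and the plan is to argue as in \cite[Theorem~1.11]{LacSawUri10}. The \emph{necessity} of the two conditions is elementary. Testing $T(\cdot\,\sigma)$ on $f=\mathds 1_R$ and using $\langle\mathds 1_R\sigma\rangle_Q=\langle\sigma\rangle_Q$ for $Q\subset R$ gives the pointwise bound $T(\mathds 1_R\sigma)\geq\sum_{Q\subset R}\tau_Q\langle\sigma\rangle_Q\mathds 1_Q$, whence
\begin{equation*}
	\sigma(R)^{-1/p}\biggnorm{\sum_{Q\subset R}\tau_Q\langle\sigma\rangle_Q\mathds 1_Q}_{L^q_\omega}\leq\sigma(R)^{-1/p}\norm{T(\mathds 1_R\sigma)}_{L^q_\omega}\leq\norm{T(\cdot\,\sigma)}_{L^p_\sigma\to L^q_\omega},
\end{equation*}
and taking the supremum over $R$ controls the second term on the right-hand side of the claimed equivalence. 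For the first term, the identity $\int T(f\sigma)\,g\,\omega=\sum_Q\tau_Q\abs{Q}^{-1}\bigl(\int_Q f\sigma\bigr)\bigl(\int_Q g\,\omega\bigr)=\int f\,T(g\omega)\,\sigma$ is symmetric in $(f,\sigma)\leftrightarrow(g,\omega)$, so $\norm{T(\cdot\,\sigma)}_{L^p_\sigma\to L^q_\omega}=\norm{T(\cdot\,\omega)}_{L^{q'}_\omega\to L^{p'}_\sigma}$ (note $q'\leq p'$), and testing $T(\cdot\,\omega)$ on $\mathds 1_R$ yields the first term, again bounded by $\norm{T(\cdot\,\sigma)}_{L^p_\sigma\to L^q_\omega}$. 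Thus only the estimate $\norm{T(\cdot\,\sigma)}_{L^p_\sigma\to L^q_\omega}\lesssim\mathcal T_1+\mathcal T_2$ requires work, where $\mathcal T_1,\mathcal T_2$ abbreviate the two suprema on the right.

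For \emph{sufficiency}, fix $f,g\geq 0$ with $\norm{f}_{L^p_\sigma}=\norm{g}_{L^{q'}_\omega}=1$. By $L^q_\omega$--$L^{q'}_\omega$ duality, positivity of $T$, and the identity above, it suffices to bound the bilinear sum
\begin{equation*}
	\Sigma:=\sum_{Q\in\mathcal D}\tau_Q\,\frac{\sigma(Q)\,\omega(Q)}{\abs{Q}}\,\langle f\rangle_Q^\sigma\,\langle g\rangle_Q^\omega\leq\mathcal T_1+\mathcal T_2.
\end{equation*}
Introduce \emph{parallel principal cubes}: a family $\mathcal F$ of stopping cubes for $f$ relative to $\sigma$ (the maximal descendants on which $\langle f\rangle^\sigma$ at least doubles, iterated, exactly as in the proof of Lemma~\ref{lem:reduction_principal_cubes}) and, independently, a family $\mathcal G$ of stopping cubes for $g$ relative to $\omega$. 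Writing $\pi_{\mathcal F}Q,\pi_{\mathcal G}Q$ for the minimal members of $\mathcal F,\mathcal G$ containing $Q$, one has $\langle f\rangle_Q^\sigma\leq 2\langle f\rangle_{\pi_{\mathcal F}Q}^\sigma$ and $\langle g\rangle_Q^\omega\leq 2\langle g\rangle_{\pi_{\mathcal G}Q}^\omega$; moreover the stopping rule yields the Carleson packing $\sum_{F'\in\mathcal F:\,F'\subset F}\sigma(F')\lesssim\sigma(F)$ and the attendant Carleson embedding $\sum_{F\in\mathcal F}\bigl(\langle f\rangle_F^\sigma\bigr)^p\sigma(F)\lesssim\norm{f}_{L^p_\sigma}^p$ (established as in the proof of Lemma~\ref{lem:reduction_principal_cubes}), and symmetrically for $\mathcal G,\omega$.

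Split $\Sigma=\Sigma_1+\Sigma_2$ according to whether $\pi_{\mathcal F}Q\subseteq\pi_{\mathcal G}Q$ or $\pi_{\mathcal G}Q\subsetneq\pi_{\mathcal F}Q$; the two sums are interchanged by the symmetry $(\sigma,p,\mathcal T_2)\leftrightarrow(\omega,q',\mathcal T_1)$ above (recall $q'\leq p'$), so it is enough to bound $\Sigma_1$. In $\Sigma_1$ one replaces $\langle f\rangle_Q^\sigma$ by the stopping value $\langle f\rangle_{\pi_{\mathcal F}Q}^\sigma$ (cost: a factor $2$), groups the cubes by their stopping parent $F=\pi_{\mathcal F}Q$ (so $Q\subset F$), and rewrites the inner sum as $\int\bigl(\sum_{Q\,:\,\pi_{\mathcal F}Q=F}\tau_Q\langle\sigma\rangle_Q\mathds 1_Q\bigr)\,g\,d\omega$; by the second testing condition at $R=F$ one has $\bignorm{\sum_{Q\subset F}\tau_Q\langle\sigma\rangle_Q\mathds 1_Q}_{L^q_\omega}\leq\mathcal T_2\,\sigma(F)^{1/p}$. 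After the localisation discussed below, this produces an estimate of the form $\Sigma_1\lesssim\mathcal T_2\sum_{F\in\mathcal F}\langle f\rangle_F^\sigma\,\sigma(F)^{1/p}\,\norm{g\mathds 1_{\widehat F}}_{L^{q'}_\omega}$ with sets $\widehat F$ of bounded overlap; Hölder's inequality over $F$ with exponents $q,q'$, the Carleson embedding for $g$, and the elementary inclusion $\ell^p\hookrightarrow\ell^q$ --- the one place the hypothesis $p\leq q$ is used, turning $\bigl(\sum_F(\langle f\rangle_F^\sigma\sigma(F)^{1/p})^q\bigr)^{1/q}$ into $\bigl(\sum_F(\langle f\rangle_F^\sigma)^p\sigma(F)\bigr)^{1/p}\lesssim\norm{f}_{L^p_\sigma}$ --- then close the estimate for $\Sigma_1$, hence for $\Sigma$.

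The main obstacle is exactly the localisation just invoked: the principal cubes of a single family are nested and therefore overlap, so the factor $g$ cannot simply be localised to the cubes $F$ without losing control of the overlap. The remedy is to bring in the \emph{second} family: inside each block $\{Q:\pi_{\mathcal F}Q=F\}$ one further decomposes according to $\pi_{\mathcal G}Q$, replaces $\langle g\rangle_Q^\omega$ by its $\mathcal G$-stopping value, and sums the resulting geometric series along the $\mathcal G$-chain above $F$, so that $F$ contributes only through the $\mathcal G$-``quiet'' part of its mass and the pieces $\widehat F$ (built from the disjoint $E$-sets of the two stopping families) indeed have bounded overlap. Verifying these packing and overlap bounds is the technical heart of the argument; it is carried out in full in \cite[Theorem~1.11]{LacSawUri10}, whose statement we have reproduced.
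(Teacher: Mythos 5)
The paper does not prove this lemma; it invokes it as a black box citing \cite[Theorem~1.11]{LacSawUri10}. Your proposal correctly reproduces the easy necessity direction, correctly identifies the bilinear reduction, duality, and parallel-stopping-cube/Carleson-embedding strategy used in that reference, and — like the paper itself — ultimately defers the technical packing and overlap bounds to the cited work, so your approach matches the paper's.
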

	
	Our preparations for the proof of Proposition~\ref{prop:testing} are now completed.
		
	\begin{proof}[Proof of Proposition~\ref{prop:testing}]
		Let us first consider the case $r < p \le q < \infty$. We apply Lemma~\ref{lem:linearize_square_function} for the choice $c_Q = \abs{Q}^{-\alpha r}$ which reduces the estimate to an estimate for $T(\cdot \sigma) \colon L^{p/r}_{\sigma} \to L^{q/r}_{\omega}$, where
		\begin{equation*}
			T\colon f \mapsto \sum_{Q \in \mathcal{S}} \abs{Q}^{1 -\alpha r} \sigma(Q)^{r-1} \langle f \rangle_Q \mathds{1}_Q.
		\end{equation*}
		Now, with the choice $\tau_Q = \abs{Q}^{1-\alpha r} \sigma(Q)^{r-1}$ we are in the setting of Lemma~\ref{lem:model_estimate}. Putting everything together, we get
		\begin{align*}
			\MoveEqLeft \norm{A^{r,\alpha}_{\mathcal{S}}(\cdot \sigma)}_{L^p_{\sigma} \to L^q_{\omega}} \\
			& \simeq \sup_{R \in \mathcal{S}} \omega(R)^{-1/(q/r)'} \biggnorm{\sum_{Q \in \mathcal{S}: Q \subset R} \abs{Q}^{1-\alpha r} \sigma(Q)^{r-1} \abs{Q}^{-1} \omega(Q) \mathds{1}_Q}_{L^{(p/r)'}_{\sigma}} \\
			& + \sup_{R \in \mathcal{S}} \sigma(R)^{-r/p} \biggnorm{\sum_{Q \in \mathcal{S}: Q \subset R} \abs{Q}^{1-\alpha r} \sigma(Q)^{r-1} \abs{Q}^{-1} \sigma(Q) \mathds{1}_Q}_{L^{q/r}_{\omega}}. 
		\end{align*}
		Secondly, the case $r \ge p$ follows from Lemma~\ref{lem:reduction_principal_cubes} with $c_Q = \abs{Q}^{-\alpha r}$.
	\end{proof}
	
	\section{Sharp Estimates for the Testing Constants}
	
	In the last section we saw that it suffices to control the size of the associated testing constants $\mathcal{T}$ and $\mathcal{T}^*$ instead of the operator norm of $A^{r,\alpha}_{\mathcal{S}}$. In this section we establish sharp estimates for these constants. We again need some preparatory steps before coming to the main result.
	We borrow the following lemma from \cite[Proposition~2.2]{CasOrtVer04}.

	\begin{lemma}\label{lem:norm_dyadic}
		Let $p \in (1, \infty)$ and $\sigma\colon \mathcal{B}(\IR^n) \to \IR_{\ge 0}$ be a positive Borel measure with $\sigma(O) > 0$ for all non-empty open $O \subset \IR^n$. For a collection of dyadic cubes $\mathcal{D}$ and non-negative $(\alpha_Q)_{Q \in \mathcal{D}}$ set
		\begin{equation*}
			\phi = \sum_{Q \in \mathcal{D}} \alpha_Q \mathds{1}_Q, \qquad \phi_Q = \sum_{Q' \in \mathcal{D}: Q' \subset Q} \alpha_{Q'} \mathds{1}_{Q'}.
		\end{equation*}
		Then
		\begin{equation*}
			\norm{\phi}_{L^p_{\sigma}} \simeq \biggl( \sum_{Q \in \mathcal{D}} \alpha_Q (\langle \phi_Q \rangle_Q^{\sigma})^{p-1} \sigma(Q) \biggr)^{1/p}.
		\end{equation*}
	\end{lemma}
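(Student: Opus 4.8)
Write $R$ for the right-hand side and recall that $\phi_Q\le\phi$ pointwise, so $\langle\phi_Q\rangle^\sigma_Q\le\langle\phi\rangle^\sigma_Q$. By monotone convergence it suffices to treat \emph{finite} collections $\mathcal D$; then both sides are finite and all manipulations below are legitimate.

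\emph{Step 1: $R\lesssim_p\norm{\phi}_{L^p_\sigma}$ via a stopping time.} Form the principal cubes $\mathcal F$ of $\phi$ relative to $\sigma$: $\mathcal F_0$ is the set of maximal cubes of $\mathcal D$, $\operatorname{ch}_{\mathcal F}(F)=\{\mathcal D\ni Q\subsetneq F \text{ maximal with } \langle\phi\rangle^\sigma_Q>2\langle\phi\rangle^\sigma_F\}$, and $\pi(Q)$ denotes the smallest member of $\mathcal F$ containing $Q$. Then $\langle\phi\rangle^\sigma_Q\le 2\langle\phi\rangle^\sigma_{\pi(Q)}$ for all $Q\in\mathcal D$, and since $\sum_{F'\in\operatorname{ch}_{\mathcal F}(F)}\langle\phi\rangle^\sigma_{F'}\sigma(F')=\sum_{F'}\int_{F'}\phi\,\d\sigma\le\int_F\phi\,\d\sigma=\langle\phi\rangle^\sigma_F\sigma(F)$ while each $\langle\phi\rangle^\sigma_{F'}>2\langle\phi\rangle^\sigma_F$, the $\mathcal F$-children of $F$ have total $\sigma$-measure $<\tfrac12\sigma(F)$; hence $\mathcal F$ is a $\sigma$-Carleson family, $\sum_{F'\in\mathcal F,\,F'\subseteq F}\sigma(F')<2\sigma(F)$. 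Grouping the cubes of $\mathcal D$ by their $\pi$-value and using $\sum_{Q:\,\pi(Q)=F}\alpha_Q\sigma(Q)\le\int_F\phi_F\,\d\sigma\le\langle\phi\rangle^\sigma_F\sigma(F)$ gives
\[
  R^p=\sum_{F\in\mathcal F}\sum_{Q:\,\pi(Q)=F}\alpha_Q(\langle\phi_Q\rangle^\sigma_Q)^{p-1}\sigma(Q)
  \le 2^{p-1}\sum_{F\in\mathcal F}(\langle\phi\rangle^\sigma_F)^{p-1}\!\!\sum_{Q:\,\pi(Q)=F}\!\!\alpha_Q\sigma(Q)
  \le 2^{p-1}\sum_{F\in\mathcal F}(\langle\phi\rangle^\sigma_F)^p\sigma(F),
\]
and the dyadic Carleson embedding theorem (applicable by the Carleson estimate above) bounds the last sum by $C_p\norm{\phi}_{L^p_\sigma}^p$.

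\emph{Step 2: $\norm{\phi}_{L^p_\sigma}\lesssim_p R$.} First a pointwise identity: for a.e.\ $x$, enumerating the cubes of $\mathcal D$ containing $x$ as $Q_1(x)\supsetneq Q_2(x)\supsetneq\cdots$ one has $\phi(x)=\sum_j\alpha_{Q_j(x)}$ and $\phi_{Q_j(x)}(x)=\sum_{k\ge j}\alpha_{Q_k(x)}$, so the elementary comparison $\bigl(\sum_j a_j\bigr)^p\simeq_p\sum_j a_j\bigl(\sum_{k\ge j}a_k\bigr)^{p-1}$ for $a_j\ge 0$ — the upper bound being trivial and the lower bound following from $\bigl(\sum_j a_j\bigr)^p=\int_0^{\sum_j a_j}p t^{p-1}\,\d t\le p\sum_j a_j\bigl(\sum_{k\ge j}a_k\bigr)^{p-1}$ — yields $\phi(x)^p\simeq_p\sum_{Q\in\mathcal D:\,x\in Q}\alpha_Q\phi_Q(x)^{p-1}$. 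Integrating against $\sigma$,
\[
  \norm{\phi}_{L^p_\sigma}^p\simeq_p\sum_{Q\in\mathcal D}\alpha_Q\int_Q\phi_Q^{p-1}\,\d\sigma .
\]
If $1<p\le 2$, Jensen's inequality for the concave map $t\mapsto t^{p-1}$ gives $\int_Q\phi_Q^{p-1}\,\d\sigma\le\sigma(Q)(\langle\phi_Q\rangle^\sigma_Q)^{p-1}$, and summing proves $\norm{\phi}_{L^p_\sigma}^p\lesssim_p R^p$. If $p\ge 2$ this application of Jensen points the wrong way, and one instead argues by duality: $\norm{\phi}_{L^p_\sigma}=\sup\{\langle\phi,h\rangle_\sigma: h\ge 0,\ \norm{h}_{L^{p'}_\sigma}\le 1\}$ with $p'\le 2$. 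Since $\phi^{p-1}$ is itself a nonnegative dyadic-sum function of the same type as $\phi$ (on each atom generated by $\mathcal D$ it equals $\bigl(\sum_{Q'\supseteq Q}\alpha_{Q'}\bigr)^{p-1}$, which increases toward smaller cubes), one may take the near-extremal $h\propto\phi^{p-1}$, apply the already proven case ($p'\le 2$) of the lemma to $\phi^{p-1}$, and unwind using Step 1's stopping-time machinery; this reduces the $p\ge 2$ bound to the $p'\le 2$ one.

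\emph{Main obstacle.} The only genuinely delicate point is the case $p>2$ of Step 2, where Jensen is no longer available and one must exploit the tree structure (through duality, as above, or a second principal-cube decomposition controlling $\phi$ above on each stopping region by a multiple of its $\sigma$-average); everything else is either soft or a routine stopping-time computation. The running concern throughout is to keep every implied constant dependent only on $p$ and the dimension, which is automatic in the Carleson-embedding formulation.
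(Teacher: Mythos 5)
The paper does not prove this lemma; it imports it verbatim from Cascante, Ortega and Verbitsky \cite[Prop.~2.2]{CasOrtVer04}, so there is no in-paper argument to compare against, and your attempt must stand on its own.

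Your Step~1 is correct: the principal-cube construction gives $\langle\phi_Q\rangle_Q^\sigma\le\langle\phi\rangle_Q^\sigma\le2\langle\phi\rangle_{\pi(Q)}^\sigma$, the family $\mathcal F$ is $\sigma$-Carleson, and $\sum_{Q:\pi(Q)=F}\alpha_Q\sigma(Q)\le\int_F\phi_F\,\d\sigma\le\langle\phi\rangle_F^\sigma\sigma(F)$, so Carleson embedding yields $R\lesssim_p\norm{\phi}_{L^p_\sigma}$. The pointwise identity $\phi(x)^p\simeq_p\sum_{Q\ni x}\alpha_Q\phi_Q(x)^{p-1}$ is also right for every $p>1$, and after integration this correctly reduces Step~2 to comparing $\int_Q\phi_Q^{p-1}\,\d\sigma$ with $\sigma(Q)(\langle\phi_Q\rangle_Q^\sigma)^{p-1}$, which Jensen closes for $1<p\le2$.

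There is, however, a genuine gap for $p>2$. The proposed duality shortcut --- write $\phi^{p-1}=\sum_Q\beta_Q\mathds{1}_Q$ with $\beta_Q\ge0$, then invoke the proven $p'\le2$ case of the lemma for $\phi^{p-1}$ and ``unwind'' --- does not close the argument. Applying the lemma at exponent $p'$ to $\psi:=\phi^{p-1}$ produces $\sum_Q\beta_Q(\langle\psi_Q\rangle_Q^\sigma)^{p'-1}\sigma(Q)$ where $\psi_Q=\sum_{Q'\subseteq Q}\beta_{Q'}\mathds{1}_{Q'}$, and there is no usable pointwise relation between $(\beta_Q,\psi_Q)$ and $(\alpha_Q,\phi_Q)$: on $Q$, $\psi_Q$ is a \emph{difference of $(p-1)$-th powers} of $\phi$, whereas $\phi_Q^{p-1}$ is the $(p-1)$-th power of a difference, and these are not comparable. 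The other obvious route, H\"older after factoring $\alpha_Q\sigma(Q)\langle g\rangle_Q^\sigma$, leads to the Carleson condition $\sum_{Q\subseteq P}\alpha_Q\sigma(Q)(\langle\phi_Q\rangle_Q^\sigma)^{-1}\lesssim\sigma(P)$, which is simply false: a long chain $Q_0\supsetneq\cdots\supsetneq Q_N$ with $\alpha_{Q_j}\sigma(Q_j)\equiv1$ and $\sigma(Q_j)\simeq\sigma(Q_0)$ gives a sum of size $\sigma(Q_0)\log N$. So the $p>2$ half of Step~2 is not established by what you wrote, and since you explicitly flag it as the delicate point, the proposal as it stands does not constitute a proof of the lemma for that range. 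Everything else (finiteness reduction, Step~1, the pointwise estimate, Step~2 for $p\le 2$) is fine.
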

	
	The first part of the following lemma can be deduced from the special case shown in~\cite[Lemma~5.2]{Hyt14}. However, we prefer to give a direct proof.
	
	\begin{lemma}\label{lem:measure_estimate}
		Let $\omega, \sigma\colon \IR^n \to \IR_{\ge 0}$ be weights and $\alpha, \beta, \gamma \ge 0$ with $\alpha + \beta + \gamma \ge 1$. For a sparse family $\mathcal{S}$ of cubes and a cube $R$ the following holds.
		\begin{thm_enum}
			\item\label{lem:measure_estimate:non_constant} For $\alpha > 0$ one has the universal estimate
				\begin{equation*}
					\sum_{Q \in \mathcal{S}: Q \subset R} \abs{Q}^{\alpha} \sigma(Q)^{\beta} \omega(Q)^{\gamma} \lesssim \abs{R}^{\alpha} \sigma(R)^{\beta} \omega(R)^{\gamma}.
				\end{equation*}
			\item\label{lem:measure_estimate:constant} For $\alpha = 0$ one still has the weaker inequality
				\begin{equation*}
					\sum_{Q \in \mathcal{S}: Q \subset R} \sigma(Q)^{\beta} \omega(Q)^{\gamma} \lesssim [\sigma]_{A_{\infty}}^{\beta} [\omega]_{A_{\infty}}^{\gamma} \sigma(R)^{\beta} \omega(R)^{\gamma}.
				\end{equation*}
		\end{thm_enum}
	\end{lemma}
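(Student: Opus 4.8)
The plan is to reduce both parts to a single elementary consequence of sparseness: for any non-negative reals $(a_Q)_{Q\in\mathcal S}$,
\begin{equation*}
	\sum_{Q\in\mathcal S:\,Q\subset R}a_Q\le\frac1\eta\int_R\sup\Bigl\{\tfrac{a_Q}{\abs{Q}}:x\in Q\in\mathcal S,\ Q\subset R\Bigr\}\d x ,
\end{equation*}
where $\eta$ and $(E_Q)_{Q\in\mathcal S}$ are as in the definition of sparseness. Indeed $a_Q=\tfrac{a_Q}{\abs{Q}}\abs{Q}\le\eta^{-1}\int_{E_Q}\tfrac{a_Q}{\abs{Q}}\d x$, and when we sum over $Q\subset R$ the $E_Q$ are pairwise disjoint subsets of $R$, so at every point at most one term of $\sum_Q\tfrac{a_Q}{\abs{Q}}\mathds 1_{E_Q}$ is non-zero and that term is at most the displayed supremum.

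For part~\ref{lem:measure_estimate:constant} I would first apply this with $a_Q=\mu(Q)$ for a single weight $\mu\in A_{\infty}$: since $a_Q/\abs{Q}=\langle\mu\rangle_Q=\langle\mu\mathds 1_R\rangle_Q\le M(\mu\mathds 1_R)(x)$ for $x\in Q\subset R$, we get $\sum_{Q\subset R}\mu(Q)\le\eta^{-1}\int_R M(\mu\mathds 1_R)\le\eta^{-1}[\mu]_{A_{\infty}}\mu(R)$ directly from the definition of $[\mu]_{A_{\infty}}$. To pass to the two-weight sum, pull out a factor $\sigma(R)^{\beta_2}\omega(R)^{\gamma_2}$ using $\sigma(Q)\le\sigma(R)$, $\omega(Q)\le\omega(R)$, reducing to exponents $0\le\beta_1\le\beta$, $0\le\gamma_1\le\gamma$ with $\beta_1+\gamma_1=1$ (possible since $\beta+\gamma\ge1$); then Hölder with exponents $1/\beta_1,1/\gamma_1$ splits $\sum_{Q\subset R}\sigma(Q)^{\beta_1}\omega(Q)^{\gamma_1}$ into the two single-weight estimates just proved. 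The exponents match because $[\sigma]_{A_{\infty}},[\omega]_{A_{\infty}}\ge1$, and the degenerate cases $\beta_1\in\{0,1\}$ are handled directly by the single-weight bound.

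For part~\ref{lem:measure_estimate:non_constant} the decisive feature is that the factor $\abs{Q}^{\alpha}$ with $\alpha>0$ removes the need for any $A_{\infty}$ hypothesis. Again pull out $\sigma(R)^{\beta_2}\omega(R)^{\gamma_2}$, now to reduce to $0\le\beta_1\le\beta$, $0\le\gamma_1\le\gamma$ with $\alpha+\beta_1+\gamma_1\ge1$ and—the point—$\beta_1+\gamma_1<1$; such a choice exists precisely because $\alpha>0$ while $\alpha+\beta+\gamma\ge1$. Apply the elementary inequality with $a_Q=\abs{Q}^{\alpha}\sigma(Q)^{\beta_1}\omega(Q)^{\gamma_1}$; writing $\sigma(Q)=\abs{Q}\langle\sigma\rangle_Q$, $\omega(Q)=\abs{Q}\langle\omega\rangle_Q$,
\begin{equation*}
	\frac{a_Q}{\abs{Q}}=\abs{Q}^{\,\alpha+\beta_1+\gamma_1-1}\langle\sigma\rangle_Q^{\beta_1}\langle\omega\rangle_Q^{\gamma_1}\le\abs{R}^{\,\alpha+\beta_1+\gamma_1-1}M(\sigma\mathds 1_R)(x)^{\beta_1}M(\omega\mathds 1_R)(x)^{\gamma_1}
\end{equation*}
for $x\in Q\subset R$, using $\abs{Q}\le\abs{R}$ and $\alpha+\beta_1+\gamma_1-1\ge0$. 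It then remains to show $\int_R M(\sigma\mathds 1_R)^{\beta_1}M(\omega\mathds 1_R)^{\gamma_1}\d x\lesssim\abs{R}^{\,1-\beta_1-\gamma_1}\sigma(R)^{\beta_1}\omega(R)^{\gamma_1}$; this follows from Hölder's inequality with a pair of exponents chosen so that the resulting exponents on $M(\sigma\mathds 1_R)$ and $M(\omega\mathds 1_R)$ both stay below $1$ (possible exactly when $\beta_1+\gamma_1<1$), combined with Kolmogorov's inequality and the weak $(1,1)$ bound $\norm{M(\mu\mathds 1_R)}_{L^{1,\infty}}\lesssim\mu(R)$. Multiplying back by the pulled-out factor yields $\abs{R}^{\alpha}\sigma(R)^{\beta}\omega(R)^{\gamma}$.

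The main obstacle—and the reason (a) and (b) genuinely differ—is the estimate for $\int_R M(\mu\mathds 1_R)^{\theta}$: for $\theta=1$ it is controlled by $\mu(R)$ only up to the factor $[\mu]_{A_{\infty}}$ (this is literally its definition), whereas for $\theta<1$ Kolmogorov's inequality gives the clean bound $\lesssim\abs{R}^{\,1-\theta}\mu(R)^{\theta}$ with no loss. In (a) the surplus $\alpha>0$ is exactly what pushes the relevant exponents strictly below $1$ after the reduction, so no weight hypothesis enters; in (b) one is pinned to the endpoint $\beta_1+\gamma_1=1$ and the $A_{\infty}$ characteristics must appear. The remaining work is routine: the exponent bookkeeping in the two Hölder reductions, the choice of the auxiliary Hölder exponents in (a), and the degenerate cases with $\beta=0$ or $\gamma=0$.
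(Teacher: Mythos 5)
Your proposal is correct and follows essentially the same strategy as the paper: both proofs convert the sum over cubes to an integral over $R$ via sparseness, split with H\"older, and then invoke Kolmogorov's inequality together with the weak $(1,1)$ bound for $M$ in part~(a), versus the definition of $[\cdot]_{A_\infty}$ in part~(b). The only (cosmetic) differences are your nice up-front extraction of a reusable sparseness lemma, and your reduction to the normalized exponent range by pulling out powers of $\sigma(R)$ and $\omega(R)$ rather than by the $\ell^\delta\hookrightarrow\ell^1$ embedding used in the paper.
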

	\begin{proof}
		For both parts it suffices to treat the case $\alpha + \beta + \gamma = 1$. In fact, if $\alpha + \beta + \gamma \ge 1$, let $\delta = (\alpha + \beta + \gamma)^{-1} \le 1$. Then, for example for part~\ref{lem:measure_estimate:non_constant}, one has
		\begin{align*}
			\MoveEqLeft \sum_{Q \in \mathcal{S}: Q \subset R} \abs{Q}^{\alpha} \sigma(Q)^{\beta} \omega(Q)^{\gamma} \le \biggl( \sum_{Q \in \mathcal{S}: Q \subset R} \abs{Q}^{\delta \alpha} \sigma(Q)^{\delta \beta} \omega(Q)^{\delta \gamma} \biggr)^{1/\delta} \\
			& \lesssim \bigl( \abs{R}^{\alpha \delta } \sigma(R)^{\beta \delta} \omega(R)^{\gamma \delta} \bigr)^{1/\delta} = \abs{R}^{\alpha} \sigma(R)^{\beta} \omega(R)^{\gamma}.
		\end{align*}
		Now let $\alpha + \beta + \gamma = 1$. Let us start with the proof of part~\ref{lem:measure_estimate:non_constant}. We have the estimate
		\begin{align*}
				\MoveEqLeft \sum_{Q \in \mathcal{S}: Q \subset R} \abs{Q}^{\alpha} \sigma(Q)^{\beta} \omega(Q)^{\gamma} = \sum_{Q \in \mathcal{S}: Q \subset R} \abs{Q}^{\alpha + \beta + \gamma} \biggl( \frac{1}{\abs{Q}} \int_Q \sigma \biggr)^{\beta} \biggl( \frac{1}{\abs{Q}} \int_Q \omega \biggr)^{\gamma} \\
				& \le \sum_{Q \in \mathcal{S}: Q \subset R} \abs{Q} \inf_{Q} M(\sigma \mathds{1}_R)^{\beta} \inf_{Q} M(\omega \mathds{1}_R)^{\gamma}.
			\end{align*}
		Since $\beta + \gamma < 1$, there exists some $p \in (1, \infty)$ with $p \beta < 1$ and $p' \gamma < 1$. Using H\"older's inequality, the above term is dominated by
		\begin{align*}
			\MoveEqLeft \biggl( \sum_{Q \in \mathcal{S}: Q \subset R} \abs{Q} \inf_{Q} M(\sigma \mathds{1}_R)^{p\beta} \biggr)^{1/p} \biggl( \sum_{Q \in \mathcal{S}: Q \subset R} \abs{Q} \inf_{Q} M(\omega \mathds{1}_R)^{p' \gamma} \biggr)^{1/p'}.
		\end{align*}
		We now estimate the first term in brackets. Using the sparseness of $\mathcal{S}$ and $\norm{f}_{L^{1,\infty}} \simeq \sup_{\abs{E} \in (0, \infty)} \abs{E}^{1-\frac{1}{p\beta}} ( \int_E \abs{f}^{p\beta} )^{1/(p\beta)}$, this term is dominated up to a universal constant by
		\begin{align*}
			\MoveEqLeft \biggl( \sum_{Q \in \mathcal{S}: Q \subset R} \abs{E(Q)} \inf_{Q} M(\sigma \mathds{1}_R)^{p\beta} \biggr)^{1/p} \le \biggl( \int_{R} M(\sigma \mathds{1}_R)^{p \beta} \biggr)^{1/p} \\
			& \lesssim \norm{M(\sigma \mathds{1}_R)}_{L^{1, \infty}}^{\beta} \abs{R}^{1/p-\beta} \lesssim \norm{\sigma \mathds{1}_R}_{L^1}^{\beta} \abs{R}^{1/p-\beta} = \sigma(R)^{\beta} \abs{R}^{1/p - \beta}.
		\end{align*}
		Putting all the estimates together, we obtain the asserted inequality. For part~\ref{lem:measure_estimate:constant} observe that by Hölder's inequality
		\begin{align*}
			\sum_{Q \in \mathcal{S}: Q \subset R} \sigma(Q)^{\beta} \omega(Q)^{\gamma} & \le \biggl( \sum_{Q \in \mathcal{S}: Q \subset R} \sigma(Q) \biggr)^{\beta} \biggl( \sum_{Q \in \mathcal{S}: Q \subset R} \omega(Q) \biggr)^{\gamma} \\
			& \lesssim [\sigma]_{A_{\infty}}^{\beta} [\omega]_{A_{\infty}}^{\gamma} \sigma(R)^{\beta} \omega(R)^{\gamma}.
		\end{align*}
		The inequalities used in the last estimate can be seen as follows (for $\omega$):
		\begin{equation*}
			\sum_{Q \in \mathcal{S}: Q \subset R} \int_Q \omega \le \sum_{Q \in \mathcal{S}: Q \subset R} \abs{Q} \inf_{E(Q)} M(\mathds{1}_Q \omega) \lesssim \int_{R} M(\mathds{1}_Q \omega) \le[\omega]_{{A}_{\infty}} \omega(R). \qedhere
		\end{equation*}
	\end{proof}
	
	The next proposition is the heart of the article. Here we prove the required estimates on the testing constants $\mathcal{T}$ and $\mathcal{T}^*$. Note the emergence of additional factors in the diagonal case for fractional sparse operators. 
	
	\begin{theorem}\label{thm:two_weight_sparse}
		Let $\omega, \sigma\colon \IR^n \to \IR_{\ge 0}$ be $A_{\infty}$-weights, $\mathcal{S}$ a sparse family of dyadic cubes, $r \in (0, \infty)$ and $\alpha \in (0,1]$. For $1 < p \le q < \infty$ with $-\alpha + \frac{1}{q} + \frac{1}{p'} \ge 0$ one has
		\begin{equation*}
			\mathcal{T} \lesssim \begin{cases}
				[\omega, \sigma]_{A_{pq}^{\alpha}}^r [\sigma]_{A_{\infty}}^{1-(1-r/p)^2} [\omega]_{A_{\infty}}^{(1-r/p)^2} & \text{if } p = q \text{ and } \alpha < 1 \text{ and } p > r, \\
				[\omega,\sigma]_{A_{pq}^{\alpha}}^{r} [\sigma]_{A_{\infty}}^{r/q} & \text{else.}
			\end{cases}
		\end{equation*}
		Further, if $p > r$, the second testing constant satisfies
		\begin{equation*} 
			\mathcal{T^*} \lesssim \begin{cases}
				[\omega, \sigma]_{A_{pq}^{\alpha}}^{r} [\omega]_{A_{\infty}}^{1-(r/p)^2} [\sigma]_{A_{\infty}}^{(r/p)^2} & \text{if } p = q \text{ and } \alpha < 1, \\
				[\omega, \sigma]_{A_{pq}^{\alpha}}^{r} [\omega]_{A_{\infty}}^{1-r/ p} & \text{else.}
			\end{cases}
		\end{equation*}
	\end{theorem}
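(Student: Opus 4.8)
The plan is to estimate each testing constant by a dyadic summation argument, using Lemma~\ref{lem:norm_dyadic} to convert the $L^{q/r}_\omega$- and $L^{(p/r)'}_\sigma$-norms of the relevant sums into sums over cubes $Q\in\mathcal S$ with $Q\subset R$, and then bounding these sums by Lemma~\ref{lem:measure_estimate}. Concretely, for $\mathcal T$ I would fix $R\in\mathcal S$, write $\phi=\sum_{Q\subset R}\abs{Q}^{-\alpha r}\sigma(Q)^r\mathds 1_Q$, and apply Lemma~\ref{lem:norm_dyadic} with $p$ replaced by $q/r$ (this requires $q/r>1$; the degenerate case $q/r\le 1$ is easy since then the $L^{q/r}$-quasinorm is subadditive and one sums directly). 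This turns $\norm{\phi}_{L^{q/r}_\omega}^{q/r}$ into $\sum_{Q\subset R}\abs{Q}^{-\alpha r}\sigma(Q)^r\big(\langle\phi_Q\rangle_Q^\omega\big)^{q/r-1}\omega(Q)$. The inner average $\langle\phi_Q\rangle_Q^\omega$ is $\omega(Q)^{-1}\sum_{Q'\subset Q}\abs{Q'}^{-\alpha r}\sigma(Q')^r\omega(Q')$, and here is where the two-weight characteristic enters: each term $\abs{Q'}^{-\alpha r}\sigma(Q')^r\omega(Q')=\big(\abs{Q'}^{-\alpha}\sigma(Q')^{1/p'+?}\dots\big)$ must be reorganized so that a factor $[\omega,\sigma]_{A_{pq}^\alpha}^r=\sup\abs{Q'}^{-\alpha r}\omega(Q')^{r/q}\sigma(Q')^{r/p'}$ can be pulled out. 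Doing so leaves a residual sum of the form $\sum_{Q'\subset Q}\sigma(Q')^{r-r/p'}\omega(Q')^{1-r/q}$ (in the non-diagonal case with the leftover power of $\abs{Q'}$ strictly positive, so that Lemma~\ref{lem:measure_estimate}\ref{lem:measure_estimate:non_constant} applies and no $A_\infty$ factor is needed beyond a single one), and when $\alpha=1$, $p=q$ the leftover $\abs{Q'}$-power is zero, forcing use of part~\ref{lem:measure_estimate:constant} and producing $[\sigma]_{A_\infty}$, $[\omega]_{A_\infty}$ factors.

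For the genuinely diagonal fractional case $p=q$, $\alpha<1$, $p>r$—which is the source of the extra $A_\infty$ factors and the real novelty—I expect one more layer of bookkeeping: after extracting $[\omega,\sigma]_{A_{pq}^\alpha}^r$ the residual exponents of $\sigma(Q')$ and $\omega(Q')$ are $r(1-1/p)=r/p'$ and $1-r/q=1-r/p$ respectively, which sum to exactly $1$ (this is precisely the $\alpha+\beta+\gamma=1$ borderline of Lemma~\ref{lem:measure_estimate}). Applying part~\ref{lem:measure_estimate:constant} then gives a factor $[\sigma]_{A_\infty}^{r/p'}[\omega]_{A_\infty}^{1-r/p}$ on the inner sum. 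Substituting back and using part~\ref{lem:measure_estimate:constant} once more on the outer sum—now with exponents $\sigma$: $r$ and a power of $\omega$ contributed by the $(q/r-1)$-exponent—one reassembles the total $A_\infty$-exponents. The arithmetic must be arranged so that the $\sigma$-exponent comes out to $1-(1-r/p)^2$ and the $\omega$-exponent to $(1-r/p)^2$; the cleanest way is to split the $\sigma(Q')^{r/p'}$ and $\omega(Q')^{1-r/p}$ appearing inside $\langle\phi_Q\rangle_Q^\omega$ using Hölder with a carefully chosen exponent (as in the proof of Lemma~\ref{lem:measure_estimate}\ref{lem:measure_estimate:non_constant}), isolating the part that can be summed with an $\abs{Q'}$-gain against the part that cannot—the threshold $1-r/p$ (the fraction of the dual exponent $1/p'$ scaled by $r$) is what produces the squares. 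I would do the $\mathcal T^*$ estimate in exactly the same manner: apply Lemma~\ref{lem:norm_dyadic} with exponent $(p/r)'$, extract $[\omega,\sigma]_{A_{pq}^\alpha}^r$ from the terms $\abs{Q}^{-\alpha r}\sigma(Q)^{r-1}\omega(Q)$, and control the residual sum via Lemma~\ref{lem:measure_estimate}; the symmetry between $\mathcal T$ and $\mathcal T^*$ explains the symmetry of the two exponent formulas under $r/p\leftrightarrow 1-r/p$ in the diagonal case.

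**Main obstacle.** The routine part is the reduction via Lemmas~\ref{lem:norm_dyadic} and~\ref{lem:measure_estimate}; the delicate part is the \emph{exponent accounting} in the diagonal fractional case. One must split each summand's weight powers by Hölder at precisely the right split-point so that (i) the part summed with the $\abs{Q}$-gain uses only part~\ref{lem:measure_estimate:non_constant} (contributing no $A_\infty$), (ii) the part summed at the borderline uses part~\ref{lem:measure_estimate:constant} contributing $[\sigma]_{A_\infty}$ and $[\omega]_{A_\infty}$ to exactly the powers claimed, and (iii) after back-substitution through the outer sum the total exponents telescope to $1-(1-r/p)^2$ and $(1-r/p)^2$ (resp.\ $1-(r/p)^2$ and $(r/p)^2$ for $\mathcal T^*$). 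Verifying that these nested applications of Hölder are consistent—i.e.\ that the chosen auxiliary exponents are admissible ($>1$ and with conjugate $>1$) throughout the range $0<r<p$ and $0<\alpha<1$—is where the proof requires genuine care rather than bookkeeping.
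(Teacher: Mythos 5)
Your overall architecture matches the paper's: rewrite the $L^{q/r}_\omega$ (resp.\ $L^{(p/r)'}_\sigma$) norm via Lemma~\ref{lem:norm_dyadic}, bound the resulting inner and outer sums via Lemma~\ref{lem:measure_estimate}, and reduce $q\le r$ to $q>r$.  (Your direct summation for $q/r\le 1$ via superadditivity of $t\mapsto t^{q/r}$ is a valid alternative to the paper's monotonicity reduction $\mathcal T_r^{1/r}\le\mathcal T_s^{1/s}$.)  However, there are two genuine gaps in the mechanism you describe.

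First, you propose to extract the fixed power $[\omega,\sigma]^r_{A_{pq}^\alpha}$ from each summand of the inner sum and then claim that \enquote{in the non-diagonal case the leftover power of $|Q'|$ is strictly positive, so that Lemma~\ref{lem:measure_estimate}\ref{lem:measure_estimate:non_constant} applies.}  This is false: writing $|Q'|^{-\alpha r}\sigma(Q')^r\omega(Q')=\bigl(|Q'|^{-\alpha}\sigma(Q')^{1/p'}\omega(Q')^{1/q}\bigr)^\delta\, |Q'|^{\alpha(\delta-r)}\sigma(Q')^{r-\delta/p'}\omega(Q')^{1-\delta/q}$, the residual $|Q'|$-exponent is $\alpha(\delta-r)$, which vanishes precisely when $\delta=r$.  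To make it positive off the diagonal you must extract a \emph{strictly larger} power $\delta>r$ of the two-weight characteristic, carry the residual exponents through the outer sum, and extract a \emph{second}, complementary power of $[\omega,\sigma]$ there so that the total comes out to $r$.  This flexible one-parameter $\delta\in[r,\min(rp',q)]$ (subject to the extra constraint from the homogeneity condition of Lemma~\ref{lem:measure_estimate}) is the central technical device; without it you cannot avoid the spurious $A_\infty$ factors in the non-diagonal case, so the bound you would prove there would be weaker than claimed.  Incidentally, the residual $\sigma$-exponent at $\delta=r$ is $r-r/p'=r/p$, not $r/p'$ as you wrote.

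Second, your explanation of where the squared exponents $(1-r/p)^2$ and $1-(1-r/p)^2$ come from (a \enquote{H\"older split at the threshold $1-r/p$}) does not match the actual mechanism and would not reproduce those numbers.  What really happens is that on the diagonal the only admissible $\delta$ is $r$, so Lemma~\ref{lem:measure_estimate}\ref{lem:measure_estimate:constant} applied to the inner sum produces $[\sigma]_{A_\infty}^{r/p}[\omega]_{A_\infty}^{1-r/p}$; this factor then sits inside the bracket that is raised to the power $q/r-1$ by Lemma~\ref{lem:norm_dyadic} and finally to the power $r/q$ when the norm is taken, so it is scaled by $(q/r-1)(r/q)=1-r/q=1-r/p$ overall; combining with the $[\sigma]_{A_\infty}^{r/q}=[\sigma]_{A_\infty}^{r/p}$ factor coming from the outer sparse sum gives exactly $[\sigma]_{A_\infty}^{(r/p)(1-r/p)+r/p}[\omega]_{A_\infty}^{(1-r/p)^2}=[\sigma]_{A_\infty}^{1-(1-r/p)^2}[\omega]_{A_\infty}^{(1-r/p)^2}$.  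The squares are a product of exponents through two nested summations, not the result of a clever auxiliary H\"older exponent inside the inner sum.

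Before this proof can be accepted you would need to (i) introduce the flexible extraction parameter $\delta$ with the four admissibility constraints, verify a suitable $\delta>r$ exists off the diagonal, and track the second extraction from the outer sum; and (ii) replace the H\"older-split heuristic in the diagonal case with the explicit propagation of the inner $A_\infty$ factors through the $q/r-1$ and $r/q$ powers.
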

	\begin{proof}
		
		Throughout the proof, we write $[\omega,\sigma]:=[\omega,\sigma]_{A_{pq}^\alpha}$ for brevity.
		
		\textbf{Part I:} We begin with the estimate for the testing constant $\mathcal{T}$. Here we start with the special case $q > r$. By Lemma~\ref{lem:norm_dyadic} and $\frac{q}{r} > 1$ we rewrite the norm as
    	\begin{align*}
    		\MoveEqLeft \bignorm{\sum_{Q \in \mathcal{S}: Q \subset R} \abs{Q}^{-\alpha r} \sigma(Q)^r \mathds{1}_Q}_{L^{q/r}_{\omega}} \\
    		& \simeq \biggl( \sum_{Q \in \mathcal{S}: Q \subset R} \abs{Q}^{-\alpha r} \sigma(Q)^r \omega(Q) \\
			& \qquad \cdot \Big[\omega(Q)^{-1} \sum_{Q' \in \mathcal{S}: Q' \subset Q} \abs{Q'}^{-\alpha r} \sigma(Q')^r \omega(Q') \Big]^{q/r - 1} \biggr)^{r/q}.
    	\end{align*}
		As a first step we estimate the inner sum (from now on omitting $Q' \in \mathcal{S}$)
		\begin{equation*}
			\sum_{Q' \subset Q} \abs{Q'}^{-\alpha r} \sigma(Q')^r \omega(Q').
		\end{equation*}
		For this let $\delta > 0$ be arbitrary. For each summand we have the estimate
		\begin{equation}
			\label{eq:estimate_use_two_weight}
			\begin{split}
				\MoveEqLeft \abs{Q'}^{-\alpha r} \sigma(Q')^r \omega(Q') \\
				& = (\abs{Q'}^{-\alpha} \sigma(Q')^{1/p'} \omega(Q')^{1/q})^{\delta} \abs{Q'}^{\alpha(\delta - r)} \sigma(Q')^{r-\delta/p'} \omega(Q')^{1-\delta/q} \\
				& \le [\omega, \sigma]^{\delta} \abs{Q'}^{\alpha(\delta - r)} \sigma(Q')^{r-\delta/p'} \omega(Q')^{1-\delta/q}.
			\end{split}
		\end{equation}
		We want to use Lemma~\ref{lem:measure_estimate}. Its assumptions imply restrictions on the possible range of $\delta$. In fact, the following four conditions must be satisfied:
		\begin{equation}\label{eq:delta_assumptions}
			\begin{split}
				\alpha (\delta-r) \ge 0 & \Leftrightarrow \delta \ge r \\
				r-\frac{\delta}{p'} \ge 0 & \Leftrightarrow \delta \le rp' \\
				1-\frac{\delta}{q} \ge 0 & \Leftrightarrow \delta \le q \\
				\alpha(\delta-r) + r - \frac{\delta}{p'} + 1 - \frac{\delta}{q} \ge 1 & \Leftrightarrow \delta \left(\alpha - \frac{1}{p'} - \frac{1}{q} \right) + r(1-\alpha) \ge 0.
			\end{split}
		\end{equation}
		The first three conditions imply that $\delta \in [r, \min(rp',q)]$ which has non-empty interior by our assumption $q > r$. The restriction imposed by the last inequality depends on the parameters. First, if $\alpha - \frac{1}{p'} - \frac{1}{q} = 0$, the last condition holds because of $\alpha \le 1$. 
		Secondly, if $\alpha < \frac{1}{p'} + \frac{1}{q}$ (this implies $\alpha < 1$), we have
		\begin{equation*}
			\delta \le r \frac{1 - \alpha}{\frac{1}{p'} + \frac{1}{q} - \alpha}.
		\end{equation*}
		Note that because of $\frac{1}{p'} + \frac{1}{q} \le 1$ the fraction on the right hand side is bigger or equal to $1$. Hence, if $q > p$, we can find a $\delta$ with $\delta > r$ satisfying all conditions in~\eqref{eq:delta_assumptions}. However, in the case $p = q$ the only possible choice is $\delta = r$. This is the only case where the strict inequality $\delta > r$ cannot be achieved. Summarizing our findings, Lemma~\ref{lem:measure_estimate} gives the estimate
		\begin{align*}
			\MoveEqLeft \sum_{Q' \subset Q} \abs{Q'}^{-\alpha r} \sigma(Q')^r \omega(Q') \\
			& \lesssim [\omega, \sigma]^{\delta} \abs{Q}^{\alpha(\delta-r)} \sigma(Q)^{r-\delta/p'} \omega(Q)^{1-\delta/q} \cdot\begin{cases}
				[\sigma]_{A_{\infty}}^{r/p} [\omega]_{A_{\infty}}^{1-r/p} & p = q \text{ and } \alpha < 1, \\
				1 & \text{else}.
			\end{cases}
		\end{align*}
    	We now show estimates for arbitrary $\delta > 0$ satisfying the inequalities in~\eqref{eq:delta_assumptions}. In the following we will ignore the additional factors in the case $p = q$ and $\alpha < 1$. Using the estimate for the inner sum we have
    	\begin{align*}
    		\MoveEqLeft \bignorm{\sum_{Q \in \mathcal{S}: Q \subset R} \abs{Q}^{-\alpha r} \sigma(Q)^r \mathds{1}_Q}_{L^{q/r}_{\omega}} \\
    		& \lesssim [\omega,\sigma]^{\delta(1-r/q)} \biggl( \sum_{Q \in \mathcal{S}: Q \subset R} \abs{Q}^{-\alpha r} \sigma(Q)^r \omega(Q) \\
			& \qquad \cdot \bigl(\omega(Q)^{-1} \sigma(Q)^{r-\delta/p'} \abs{Q}^{\alpha(\delta-r)} \omega(Q)^{1-\delta/q} \bigr)^{q/r - 1} \biggr)^{r/q} \\
    		& = [\omega,\sigma]^{\delta(1-r/q)} \biggl( \sum_{Q \in \mathcal{S}: Q \subset R} \abs{Q}^{\alpha(-q + \delta q/r - \delta)} \\
			& \qquad \cdot \omega(Q)^{1+\delta/q-\delta/r} \sigma(Q)^{q-\delta/p'(q/r-1)} \biggr)^{r/q}.
    	\end{align*}
    	We pull another power of the two-weight constant out of the sum using exactly the power for which the sum becomes independent of the weight $\omega$. Explicitly, this gives
    	\begin{align*}
    		\MoveEqLeft \abs{Q}^{-\alpha q + \delta \alpha q/r - \delta \alpha} \omega(Q)^{1 + \delta/q - \delta / r} \sigma(Q)^{q-\delta/p'(q/r-1)} \\
			& = \bigl( \abs{Q}^{-\alpha} \omega(Q)^{1/q} \sigma(Q)^{1/p'} \bigr)^{q \cdot (1+ \delta (1/q - 1/r))} \abs{Q}^{\alpha(-q + \delta q/r - \delta) + \alpha(q+\delta(1-q/r))} \\
			& \qquad \cdot \sigma(Q)^{-q/p'\cdot(1+\delta(1/q-1/r)) + q - \delta/p'(q/r-1)} \\
			& \le [\omega, \sigma]^{q (1+ \delta (1/q - 1/r))} \sigma(Q)^{q/p}.
    	\end{align*}
    	For the last estimate we need $\delta \bigl(\frac{1}{q} - \frac{1}{r} \bigr) \ge -1$. For $q > r$ this is equivalent to 
		\begin{equation*}
			\delta \le \frac{1}{\frac{1}{r} - \frac{1}{q}} = r \cdot \frac{1}{1-\frac{r}{q}}.
		\end{equation*}
		The second factor on the right is bigger than $1$ and therefore we find a suitable $\delta$ satisfying our new restriction together with~\eqref{eq:delta_assumptions}.
		Putting all together and using $\delta \bigl( 1- \frac{r}{q} \bigr) + r \bigl( 1+ \delta \bigl( \frac{1}{q} - \frac{1}{r} \bigr) \bigr) = r$ for the power of $[\omega, \sigma]$, we obtain by Lemma~\ref{lem:measure_estimate}
    	\begin{align*}
    		\MoveEqLeft \bignorm{\sum_{Q \in \mathcal{S}: Q \subset R} \abs{Q}^{-\alpha r} \sigma(Q)^r \mathds{1}_Q}_{L^{q/r}_{\omega}} \lesssim [\omega,\sigma]^r \biggl( \sum_{Q \in \mathcal{S}: Q \subset R} \sigma(Q)^{q/p} \biggr)^{r/q} \\
	& \leq [\omega,\sigma]^r \biggl( \sigma(R)^{q/p-1}\sum_{Q \in \mathcal{S}: Q \subset R} \sigma(Q) \biggr)^{r/q} \\
		 & \lesssim [\omega,\sigma]^r \biggl( \sigma(R)^{q/p-1}[\sigma]_{A_\infty}\sigma(R) \biggr)^{r/q} \leq [\omega,\sigma]^r[\sigma]_{A_\infty}^{r/q}\sigma(R)^{r/p}.
    	\end{align*}
    	 
	 	This finishes the estimate for $\mathcal{T}$ in the case $q > r$. We let $\mathcal{T}_r$ denote the value of $\mathcal{T}$ for a particular choice of $r$. For the case $q > p$ or $\alpha = 1$ now suppose that $q \le r$. We choose some $0 < s < q \le r$. We then have
		\begin{equation}
			\label{eq:reduction_testing_constant}
			\begin{split}
				\mathcal{T}_r^{1/r} & = \sup_{R \in \mathcal{S}} \sigma(R)^{-1/p} \biggnorm{\biggl( \sum_{Q \in \mathcal{S}: Q \subset R} \abs{Q}^{-\alpha r} \sigma(Q)^{r} \mathds{1}_Q \biggr)^{1/r}}_{L^q_{\omega}} \\
				& \le \sup_{R \in \mathcal{S}} \sigma(R)^{-1/p} \biggnorm{\biggl( \sum_{Q \in \mathcal{S}: Q \subset R} \abs{Q}^{-\alpha s} \sigma(Q)^{s} \mathds{1}_Q \biggr)^{1/s}}_{L^q_{\omega}} \\
				& \le \mathcal{T}_s^{1/s} \lesssim [\omega, \sigma] [\sigma]_{A_{\infty}}^{1/q}.
			\end{split}
		\end{equation}
		Taking both sides of the inequality to the power $r$ gives the desired estimate. Let us come to the very last case, namely $q \le r$, $p = q$ and $\alpha < 1$. Here we start with the special choice $r = q = p$. Then by~\eqref{eq:estimate_use_two_weight} for $\delta = r$
		\begin{align*}
			\MoveEqLeft \biggnorm{\sum_{Q \in \mathcal{S}: Q \subset R} \abs{Q}^{-\alpha r} \sigma(Q) \mathds{1}_Q}_{L^{q/r}_{\omega}} = \sum_{Q \in \mathcal{S}: Q \subset R} \abs{Q}^{-\alpha r} \sigma(Q)^{r} \omega(Q) \\
			& \le [\omega, \sigma]^r \sum_{Q \in \mathcal{S}: Q \subset R} \sigma(Q) \lesssim [\omega,\sigma]^r [\sigma]_{A_{\infty}} \sigma(R).
		\end{align*}
		Hence, $\mathcal{T}_q \lesssim [\omega, \sigma]^r [\sigma]_{A_{\infty}}$. Now, if $q < r$, then we choose $s = p = q$, and the same reasoning as in~\eqref{eq:reduction_testing_constant} gives
		\begin{equation*}
			\mathcal{T}_r^{1/r} \le \mathcal{T}_q^{1/q} \lesssim [\omega, \sigma]_{A_{pp}^{\alpha}} [\sigma]_{A_{\infty}}^{1/p}.
		\end{equation*}

		\textbf{Part II:} We now come to the estimate for the testing constant $\mathcal{T}^*$. Recall that here we are only interested in the case $p > r$. We write $s = (p/r)'$. We use Lemma~\ref{lem:norm_dyadic} again to rewrite the involved norm as
    	\begin{align*}
    		\MoveEqLeft \bignorm{\sum_{Q \in \mathcal{S}: Q \subset R} \abs{Q}^{-\alpha r} \sigma(Q)^{r-1} \omega(Q) \mathds{1}_Q}_{L^{(p/r)'}_{\sigma}} \\
    		& \simeq \biggl( \sum_{Q \in \mathcal{S}: Q \subset R} \sigma(Q) \abs{Q}^{-\alpha r} \sigma(Q)^{r-1} \omega(Q) \\
			& \qquad \cdot  \Big[\sigma(Q)^{-1} \sum_{Q' \subset Q} \abs{Q'}^{-\alpha r} \sigma(Q')^{r-1} \omega(Q') \sigma(Q') \Big]^{s - 1} \biggr)^{1/s} \\
    		& = \biggl( \sum_{Q \in \mathcal{S}: Q \subset R} \abs{Q}^{-\alpha r} \sigma(Q)^{r} \omega(Q) \Big[\sigma(Q)^{-1} \sum_{Q' \subset Q} \abs{Q'}^{-\alpha r} \sigma(Q')^{r} \omega(Q') \Big]^{s - 1} \biggr)^{1/s}.
    	\end{align*}
    	Observe that the inner sum $\sum_{Q' \subset Q} \abs{Q'}^{-\alpha r} \sigma(Q')^r \omega(Q')$ is exactly the same sum as in the first part of the proof. Hence, exactly the same considerations and estimates apply here. Again ignoring the additional constants appearing in the case $\alpha < 1$ and $p = q$, we get
    	\begin{align*}
    		\MoveEqLeft \bignorm{\sum_{Q \in \mathcal{S}: Q \subset R} \abs{Q}^{-\alpha r} \sigma(Q)^{r-1} \omega(Q) \mathds{1}_Q}_{L^{(p/r)'}_{\sigma}} \\
    		& \lesssim [\omega,\sigma]^{\delta/s'} \biggl( \sum_{Q \in \mathcal{S}: Q \subset R} \abs{Q}^{-\alpha r} \sigma(Q)^{r} \omega(Q) \\
			& \qquad \cdot \bigl(\sigma(Q)^{-1} \sigma(Q)^{r-\delta/p'} \abs{Q}^{\alpha (\delta-r)} \omega(Q)^{1-\delta/q} \bigr)^{s - 1} \biggr)^{1/s} \\
    		& = [\omega,\sigma]^{\delta r/p} \biggl( \sum_{Q \in \mathcal{S}: Q \subset R} \abs{Q}^{\alpha (-r + (\delta-r)(s-1))} \sigma(Q)^{r+(r-1-\delta/p')(s-1)} \\
			& \qquad \cdot \omega(Q)^{1+(1-\delta/q)(s-1)} \biggr)^{1/s} \\
			& = [\omega, \sigma]^{\delta r/p} \biggl( \sum_{Q \in \mathcal{S}: Q \subset R} \abs{Q}^{\alpha(-rs+\delta(s-1))} \sigma(Q)^{rs-(1+\delta/p')(s-1)} \\
			& \qquad \cdot \omega(Q)^{s-\delta/q(s-1)} \biggr)^{1/s}.
    	\end{align*}
    	In the following paragraph we will verify the following estimate step by step:
    	\begin{equation}
			\label{eq:ugly_powers}
			\begin{split}
    			\MoveEqLeft \abs{Q}^{\alpha(-rs+\delta(s-1))} \sigma(Q)^{rs-(1+\delta/p')(s-1)} \omega(Q)^{s-\delta/q(s-1)} \\
				& = \bigl( \abs{Q}^{-\alpha} \sigma(Q)^{1/p'} \omega(Q)^{1/q} \bigr)^{p' \cdot (rs-(1+\delta/p')(s-1))} \\
				& \qquad \cdot \abs{Q}^{\alpha(-rs+\delta(s-1)) + \alpha p' (rs-(1+\delta/p')(s-1))} \\
				& \qquad \cdot \omega(Q)^{s-\delta/q(s-1)-p'/q(rs-(1+\delta/p')(s-1))} \\
				& \lesssim [\omega, \sigma]^{p' (rs-(1+\delta/p')(s-1))} \omega(Q)^{s/q(q-r)},
			\end{split}
    	\end{equation}
		which holds provided we can find $\delta > 0$ for which the power of $[\omega, \sigma]$ is non-negative. For this consider the following:
		\begin{align*}
			\MoveEqLeft rs - \biggl( 1 + \frac{\delta}{p'} \biggr)(s-1) \ge 0 \Leftrightarrow r - \biggl( 1 + \frac{\delta}{p'} \biggr) \frac{r}{p} \ge 0 \\
			& \Leftrightarrow 1 + \frac{\delta}{p'} \le p \Leftrightarrow \delta \le p'(p-1) = p.
		\end{align*}
    	Note that we can find $\delta > 0$ that additionally satisfies the condition $\delta \le p$ because of the assumption $p > r$. Let us verify the identities for the powers of $\abs{Q}$ and $\omega(Q)$ used in~\eqref{eq:ugly_powers}. For the power of $\abs{Q}$ we have
    	\begin{align*}
    		\MoveEqLeft -rs + \delta(s-1) + p' \biggl( rs - \biggl( 1 + \frac{\delta}{p'} \biggr)(s-1) \biggr) = -rs + p'rs - p'(s-1) \\
			& = s(p'(r-1) - r) + p' = \frac{p(p(r-1) - r(p-1)) + p(p-r)}{(p-1)(p-r)} = 0,
    	\end{align*}
		whereas for the power of $\omega(Q)$ the following calculation shows the claimed identity
    	\begin{align*}
			\MoveEqLeft s - \frac{p'}{q} (rs - (s-1)) = s \biggl( 1 - \frac{p'}{q} \bigl( r - \bigl( 1 - \frac{1}{s} \bigr) \bigr) \biggr) = s \biggl( 1 - \frac{p'}{q} \bigl( r  - \frac{1}{s'} \bigr) \biggr) \\
			& =  s \biggl( 1 - \frac{rp'}{q} \bigl( 1 - \frac{1}{p} \bigr) \biggr) = s \biggr( 1 - \frac{r}{q} \biggr) = s \biggl( \frac{q-r}{q} \biggr).
    	\end{align*}    	
    	Hence, we have as desired
    	\begin{align*}
    		\MoveEqLeft \bignorm{\sum_{Q \in \mathcal{S}: Q \subset R} \abs{Q}^{-\alpha r} \sigma(Q)^{r-1} \omega(Q) \mathds{1}_Q}_{L^{(p/r)'}_{\sigma}} \\
			& \lesssim [\omega, \sigma]^{\delta r/p + p'/s\cdot (rs-(1+\delta/p')(s-1))} \biggl( \sum_{Q \in \mathcal{S}: Q \subset R} \omega(Q)^{s/(q/r)'} \biggr)^{1/s} \\
			& = [\omega, \sigma]^{r}  \biggl(\omega(R)^{s/(q/r)'-1} \sum_{Q \in \mathcal{S}: Q \subset R} \omega(Q) \biggr)^{1/s} \\
			& \lesssim [\omega, \sigma]^{r}  \biggl(\omega(R)^{s/(q/r)'-1} [\omega]_{A_\infty}\omega(R) \biggr)^{1/s}
    		 = [\omega, \sigma]^{r} [\omega]_{A_{\infty}}^{1/s} \omega(R)^{1/(q/r)'}.
    	\end{align*}
    	Let us again verify the power of $[\omega, \sigma]$ explicitly by a small calculation. We have
		\begin{align*}
			\MoveEqLeft \delta \frac{r}{p} + \frac{p'}{s} \biggl(rs- \biggr(1+ \frac{\delta}{p'} \biggr)(s-1) \biggr) = \delta \frac{r}{p} - \delta \biggl( 1 - \frac{1}{s} \biggr) + p' \biggl( r - \biggl(1 - \frac{1}{s} \biggr) \biggr) \\
			& = p' \biggl( r - \frac{r}{p} \biggr) = r. \qedhere
		\end{align*}
    \end{proof}

\section{The fractional square function}\label{sec:fractional_square_function}
	
	We now specialize our findings to the case of classical fractional square functions, i.e.\ $\alpha \in (0,1)$ and $r = 2$ for the condition $\alpha = \frac{1}{p'} + \frac{1}{q}$. In the one-weighted theory one here considers estimates for $L^p_{\omega^p} \to L^q_{\omega^q}$. For a weight $\omega\colon \IR^n \to \IR_{\ge 0}$, $p, q \in (1,\infty)$ and $\alpha \in (0,1]$ one is interested in sharp estimates in the one-weight characteristic
	\begin{equation*}
		[\omega]_{A_{pq}^{\alpha}} \coloneqq \sup_Q \biggl( \frac{1}{\abs{Q}} \int_Q \omega^q \biggr) \biggl( \frac{1}{\abs{Q}} \int_Q \omega^{-p'} \biggr)^{q/p'}.
	\end{equation*}
	Its relation to the two-weight characteristic is $[\omega^q, \omega^{-p'}]_{A_{pq}} = [\omega]_{A_{pq}}^{1/q}$. Hence, Theorem~\ref{thm:main} for $\sigma = \omega^{-1/(p-1)}$ gives the following mixed $A_{pq}-A_{\infty}$ estimate.
	
	\begin{corollary}
		Let $\alpha \in (0,1)$ and $1 < p \le q < \infty$ with $\frac{1}{q} + \frac{1}{p'} = \alpha$. Then
    	\begin{equation*}
      		\normalnorm{A^{2,\alpha}_{\mathcal{S}}}_{L^p_{\omega^p} \to L^q_{\omega^q}} \lesssim [\omega]^{\frac{1}{q}}_{A_{pq}} ( [\omega^{-p'}]_{A_\infty}^{\frac{1}{q}} + [\omega^q]_{A_\infty}^{(\frac{1}{2} - \frac{1}{p})_+}).
		\end{equation*}
	\end{corollary}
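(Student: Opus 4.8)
The plan is to read the Corollary off from Theorem~\ref{thm:main} by the standard device that turns an off-diagonal one-weight estimate into a two-weight one. Put $\tilde\omega:=\omega^q$ and $\tilde\sigma:=\omega^{-p'}$. The right-hand side of the asserted inequality is only meaningful when $[\omega]_{A_{pq}}<\infty$, which we therefore assume; this hypothesis is exactly the statement that $\sup_Q\langle\omega^q\rangle_Q\langle\omega^{-p'}\rangle_Q^{q/p'}<\infty$, i.e.\ $\tilde\omega\in A_{1+q/p'}$ and $\tilde\sigma\in A_{1+p'/q}$, so both $\tilde\omega$ and $\tilde\sigma$ are $A_\infty$-weights and Theorem~\ref{thm:main} applies to the pair $(\tilde\omega,\tilde\sigma)$ with the present $p$, $q$, $r=2$, $\alpha$.

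First I would record the change of variables. Since $1-p'=-p'/p$, we have $\tilde\sigma^{\,p}\omega^p=\omega^{p(1-p')}=\omega^{-p'}=\tilde\sigma$, so for $g\ge 0$ and $f:=g\tilde\sigma$ one gets $\norm{f}_{L^p_{\omega^p}}^p=\int\abs{g}^p\tilde\sigma^{\,p}\omega^p=\int\abs{g}^p\tilde\sigma=\norm{g}_{L^p_{\tilde\sigma}}^p$, while $\norm{A^{2,\alpha}_{\mathcal S}f}_{L^q_{\omega^q}}=\norm{A^{2,\alpha}_{\mathcal S}(g\tilde\sigma)}_{L^q_{\tilde\omega}}$ because $\omega^q=\tilde\omega$. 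Hence $\norm{A^{2,\alpha}_{\mathcal S}}_{L^p_{\omega^p}\to L^q_{\omega^q}}=\norm{A^{2,\alpha}_{\mathcal S}(\cdot\,\tilde\sigma)}_{L^p_{\tilde\sigma}\to L^q_{\tilde\omega}}$, and the right-hand side is governed by Theorem~\ref{thm:main}. Next I would identify the two-weight characteristic: using the Sobolev relation $\alpha=\tfrac1q+\tfrac1{p'}$ to split $\abs{Q}^{-\alpha}=\abs{Q}^{-1/q}\abs{Q}^{-1/p'}$,
\begin{equation*}
	[\tilde\omega,\tilde\sigma]_{A_{pq}^\alpha(\mathcal S)}=\sup_{Q\in\mathcal S}\bigl(\langle\omega^q\rangle_Q\bigr)^{1/q}\bigl(\langle\omega^{-p'}\rangle_Q\bigr)^{1/p'}\le\sup_{Q}\Bigl[\langle\omega^q\rangle_Q\langle\omega^{-p'}\rangle_Q^{q/p'}\Bigr]^{1/q}=[\omega]_{A_{pq}}^{1/q},
\end{equation*}
where on the right the supremum runs over all cubes.

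It then remains to decide which branch of \eqref{eq:mainres} is in force, and this is the only step needing a word of comment, though it is immediate: in the Sobolev regime $\alpha=\tfrac1q+\tfrac1{p'}$ with $\alpha<1$ one cannot have $p=q$, since $p=q$ would force $\alpha=\tfrac1p+\tfrac1{p'}=1$. Hence $p<q$ strictly, so the exceptional case ``$p=q>r$ and $\alpha<1$'' never occurs here and the first line of \eqref{eq:mainres} applies, which with $r=2$ gives
\begin{equation*}
	\norm{A^{2,\alpha}_{\mathcal S}(\cdot\,\tilde\sigma)}_{L^p_{\tilde\sigma}\to L^q_{\tilde\omega}}\lesssim[\tilde\omega,\tilde\sigma]_{A_{pq}^\alpha(\mathcal S)}\bigl([\tilde\sigma]_{A_\infty}^{1/q}+[\tilde\omega]_{A_\infty}^{(\frac12-\frac1p)_+}\bigr)\le[\omega]_{A_{pq}}^{1/q}\bigl([\omega^{-p'}]_{A_\infty}^{1/q}+[\omega^q]_{A_\infty}^{(\frac12-\frac1p)_+}\bigr),
\end{equation*}
uniformly over sparse $\mathcal S$; combining the last displays yields the claim. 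I expect essentially no real obstacle here: the argument is pure bookkeeping, the only point worth flagging being that the Sobolev scaling automatically excludes the diagonal $p=q$ and therefore also the troublesome second alternative of Theorem~\ref{thm:main}, so that the ``good'' bound is always the relevant one.
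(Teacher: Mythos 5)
Your proposal is correct and follows exactly the route the paper has in mind: substitute $\tilde\omega=\omega^q$, $\tilde\sigma=\omega^{-p'}$, use $\tilde\sigma^p\omega^p=\tilde\sigma$ to convert the one-weight operator norm into $\norm{A^{2,\alpha}_{\mathcal S}(\cdot\,\tilde\sigma)}_{L^p_{\tilde\sigma}\to L^q_{\tilde\omega}}$, identify $[\tilde\omega,\tilde\sigma]_{A_{pq}^\alpha}$ with $[\omega]_{A_{pq}}^{1/q}$ via the Sobolev relation $\alpha=\tfrac1q+\tfrac1{p'}$, and observe that the exceptional case $p=q$ with $\alpha<1$ is excluded by this same relation. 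The paper merely states this application of Theorem~\ref{thm:main} without detail; you have supplied the (correct) bookkeeping it leaves to the reader.
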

	
	One has $[\omega^q]_{A_{1+q/p'}} = [\omega]_{A_{pq}}$ and $[\omega^{-p'}]_{A_{1+p'/q}} = [\omega]_{A_{pq}}^{p'/q}$. In particular, $\omega \in A_{pq}$ implies the finiteness of the above $A_{\infty}$-characteristics. Using this relation to the $A_{pq}$-characteristic we a fortiori obtain the following pure $A_{pq}$-estimate.
	
	\begin{corollary}
		Let $\alpha \in (0,1)$ and $1 \le p \le q < \infty$ with $\frac{1}{q} + \frac{1}{p'} = \alpha$. Then
    	\begin{equation*}
      		\normalnorm{A^{2,\alpha}_{\mathcal{S}}}_{L^p_{\omega^p} \to L^q_{\omega^q}} \lesssim [\omega]_{A_{pq}}^{\frac{1}{q}} ( [\omega]_{A_{pq}}^{\frac{p'}{q^2}} + [\omega]_{A_{pq}}^{(\frac{1}{2} - \frac{1}{p})_+} ) \lesssim [\omega]_{A_{pq}}^{\max(\frac{p'}{q} \alpha, \alpha - \frac{1}{2})}. 
		\end{equation*}
	\end{corollary}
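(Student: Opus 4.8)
The plan is to derive this estimate directly from the preceding corollary, using only elementary facts about $A_s$ weights together with the two identities recorded immediately before the statement, namely $[\omega^q]_{A_{1+q/p'}} = [\omega]_{A_{pq}}$ and $[\omega^{-p'}]_{A_{1+p'/q}} = [\omega]_{A_{pq}}^{p'/q}$. First I would invoke the classical comparison $[\nu]_{A_\infty} \lesssim [\nu]_{A_s}$, valid for every $s \in (1,\infty)$ and every weight $\nu$ with the Fujii--Wilson $A_\infty$-characteristic used in this paper (the implied constant depending only on $n$). Applying it to $\nu = \omega^q$ with $s = 1 + q/p'$ and to $\nu = \omega^{-p'}$ with $s = 1 + p'/q$ gives
\begin{equation*}
  [\omega^q]_{A_\infty} \lesssim [\omega]_{A_{pq}}, \qquad [\omega^{-p'}]_{A_\infty} \lesssim [\omega]_{A_{pq}}^{p'/q}.
\end{equation*}
Substituting these two bounds into the estimate $\normalnorm{A^{2,\alpha}_{\mathcal S}}_{L^p_{\omega^p}\to L^q_{\omega^q}} \lesssim [\omega]_{A_{pq}}^{1/q}\bigl([\omega^{-p'}]_{A_\infty}^{1/q} + [\omega^q]_{A_\infty}^{(1/2-1/p)_+}\bigr)$ of the previous corollary yields at once the first asserted inequality.

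For the second inequality I would use that $[\omega]_{A_{pq}} \ge 1$, which follows from Hölder's inequality applied inside the defining supremum (with exponents $q\alpha$ and $p'\alpha$, both $\ge 1$, one gets $1 = \abs{Q}^{-1}\int_Q\omega^{1/\alpha}\omega^{-1/\alpha} \le \bigl(\abs{Q}^{-1}\int_Q\omega^q\bigr)^{1/(q\alpha)}\bigl(\abs{Q}^{-1}\int_Q\omega^{-p'}\bigr)^{1/(p'\alpha)}$, and raising to the power $q\alpha$ gives the claim). Since a base that is $\ge 1$ raised to a sum of exponents is controlled, up to the constant $2$, by the single power carrying the larger exponent, it then suffices to identify $\max\bigl(\tfrac1q + \tfrac{p'}{q^2},\ \tfrac1q + (\tfrac12-\tfrac1p)_+\bigr)$. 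Using the standing relation $\alpha = \tfrac1q + \tfrac1{p'}$, a one-line computation gives $\tfrac1q + \tfrac{p'}{q^2} = \tfrac{p'}{q}\alpha$; moreover when $p \ge 2$ one has $\tfrac1q + (\tfrac12-\tfrac1p) = \tfrac1q + \tfrac1{p'} - \tfrac12 = \alpha - \tfrac12$, while when $p < 2$ the second exponent is simply $\tfrac1q < \tfrac{p'}{q}\alpha$. In every case both exponents are bounded by $\max\bigl(\tfrac{p'}{q}\alpha,\ \alpha - \tfrac12\bigr)$, which is the asserted conclusion. The endpoint $p = 1$ (where $p'$ is read as $\infty$) is not directly covered by the previous corollary and would be handled by a routine limiting argument from $p > 1$.

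There is essentially no serious obstacle: the estimate is a formal consequence of results already established, the only external input being the standard embedding $A_s \hookrightarrow A_\infty$. The one point requiring care is the bookkeeping of exponents, i.e.\ checking that the two powers of $[\omega]_{A_{pq}}$ produced by the substitution genuinely simplify, via $\alpha = \tfrac1q + \tfrac1{p'}$, to $\tfrac{p'}{q}\alpha$ and $\alpha - \tfrac12$, and that $[\omega]_{A_{pq}}\ge 1$ so that the sum of powers collapses to the maximal one.
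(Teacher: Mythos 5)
Your proposal is correct and takes exactly the route the paper's surrounding text indicates: the two identities $[\omega^q]_{A_{1+q/p'}} = [\omega]_{A_{pq}}$ and $[\omega^{-p'}]_{A_{1+p'/q}} = [\omega]_{A_{pq}}^{p'/q}$ combined with the embedding $[\nu]_{A_\infty}\lesssim[\nu]_{A_s}$, followed by the bookkeeping that reduces the two exponents to $\tfrac{p'}{q}\alpha$ and $\alpha-\tfrac12$ via $\alpha=\tfrac1q+\tfrac1{p'}$, and $[\omega]_{A_{pq}}\ge1$ (your H\"older computation with exponents $q\alpha$ and $p'\alpha$ is correct). The only thing I would flag is your remark about $p=1$: the stated range $1\le p$ is almost certainly a slip in the paper (the preceding corollary requires $p>1$, and as $p\to1^+$ the exponent $\tfrac{p'}{q}\alpha$ diverges, so there is no meaningful limit to pass to), and a ``routine limiting argument'' does not actually produce a nontrivial $p=1$ statement; this is a cosmetic issue and does not affect the substance of your argument for $p>1$.
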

	
	This estimate is optimal in the following sense.
	
	\begin{proposition}
		Let $\alpha \in (0,1)$ and $1 < p \le q < \infty$ with $\frac{1}{q} + \frac{1}{p'} = \alpha$. If $\Phi\colon [1, \infty) \to \IR_{> 0}$ is a monotone function with
		\begin{equation*}
			\normalnorm{A^{2,\alpha}_{\mathcal{S}}}_{L^p_{\omega^p} \to L^q_{\omega^q}} \lesssim \Phi([\omega]_{A_{pq}})
		\end{equation*}
		for all $\omega \in A_{pq}$ and an implicit $\mathcal{S}$-dependent constant, then $\Phi(t) \gtrsim t^{\max(\frac{p'}{q} \alpha, \alpha - \frac{1}{2})}$.
		
	\end{proposition}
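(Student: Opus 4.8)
The plan is to exhibit, for one concrete sparse family, two one--parameter families of power weights together with explicit test functions realizing the two exponents $\tfrac{p'}{q}\alpha$ and $\alpha-\tfrac12$; the hypothesis on $\Phi$ and the monotonicity of $\Phi$ then force $\Phi(t)\gtrsim t^{\max(\frac{p'}{q}\alpha,\,\alpha-\frac12)}$. Since $[\omega]_{A_{pq}}$, the norm $\norm{A^{2,\alpha}_{\mathcal S}}_{L^p_{\omega^p}\to L^q_{\omega^q}}$, and sparseness are invariant under translations and dilations, and since $A^{2,\alpha}_{\mathcal S}\geq A^{2,\alpha}_{\mathcal S'}$ pointwise whenever $\mathcal S'\subseteq\mathcal S$, it suffices to produce the required weights for the single sparse tower $\mathcal S_0=\{Q_k:k\geq 0\}$, $Q_k=[0,2^{-k})^n$ (every sparse family relevant for the theory contains, up to an affine map, such a tower). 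For $x\in Q_0$ one has $\#\{k\geq 0:x\in Q_k\}=\lceil\log_2(1/\abs{x}_\infty)\rceil$, so that for $F\geq 0$ supported in $Q_0$
\begin{equation*}
  A^{2,\alpha}_{\mathcal S_0}F(x)^2=\sum_{k:\,x\in Q_k}\Bigl(2^{kn\alpha}\int_{Q_k}F\Bigr)^2,\qquad x\in Q_0.
\end{equation*}

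For the first exponent, fix a small $\epsilon>0$ and set $\omega(x)=\abs{x}^{(n-\epsilon)/p'}$, so that $\omega^{-p'}(x)=\abs{x}^{-(n-\epsilon)}$ is barely integrable at the origin; computing over cubes near $0$ (all other cubes contribute $O(1)$) gives $[\omega]_{A_{pq}}\simeq\epsilon^{-q/p'}$. Take $F=\omega^{-p'}\mathds{1}_{Q_0}$. Then $\int_{Q_k}F\simeq\epsilon^{-1}2^{-k\epsilon}$, so $2^{kn\alpha}\int_{Q_k}F\simeq\epsilon^{-1}2^{k(n\alpha-\epsilon)}$ is geometrically increasing and $A^{2,\alpha}_{\mathcal S_0}F(x)\gtrsim\epsilon^{-1}\abs{x}^{-(n\alpha-\epsilon)}$ on $Q_0$. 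Inserting this estimate and using $\tfrac1q+\tfrac1{p'}=\alpha$ (equivalently $\tfrac1p-\tfrac1q=1-\alpha$) to check that the exponents combine correctly, one gets $\norm{A^{2,\alpha}_{\mathcal S_0}F}_{L^q_{\omega^q}}\gtrsim\epsilon^{-1-1/q}$ while $\norm{F}_{L^p_{\omega^p}}^p=\omega^{-p'}(Q_0)\simeq\epsilon^{-1}$, whence
\begin{equation*}
  \norm{A^{2,\alpha}_{\mathcal S_0}}_{L^p_{\omega^p}\to L^q_{\omega^q}}\gtrsim\epsilon^{-1-1/q+1/p}=\epsilon^{-\alpha}\simeq[\omega]_{A_{pq}}^{\frac{p'}{q}\alpha}.
\end{equation*}

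For the second exponent, set $\omega(x)=\abs{x}^{-(n-\epsilon)/q}$, so that now $\omega^q(x)=\abs{x}^{-(n-\epsilon)}$ is barely integrable at the origin and $[\omega]_{A_{pq}}\simeq\epsilon^{-1}$. The key point is to choose $F$ so that $A^{2,\alpha}_{\mathcal S_0}F$ grows only \emph{logarithmically}: with $F(x)=\abs{x}^{-n(1-\alpha)}\mathds{1}_{Q_0}$ the quantity $2^{kn\alpha}\int_{Q_k}F$ equals a constant $c=c(n,\alpha)$ independent of $k$ and of $\epsilon$, hence $A^{2,\alpha}_{\mathcal S_0}F(x)^2\simeq\#\{k:x\in Q_k\}\simeq\log(1/\abs{x})$ and $A^{2,\alpha}_{\mathcal S_0}F(x)\gtrsim(\log(1/\abs{x}))^{1/2}$ for $x\in Q_0$ with $\abs{x}$ small. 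A Gamma--integral computation (substitute $\abs{x}=e^{-t}$) gives $\norm{A^{2,\alpha}_{\mathcal S_0}F}_{L^q_{\omega^q}}^q\gtrsim\int_{Q_0}(\log\tfrac1{\abs{x}})^{q/2}\abs{x}^{-(n-\epsilon)}\,dx\simeq\epsilon^{-q/2-1}$, while $\norm{F}_{L^p_{\omega^p}}^p\simeq\int_{Q_0}\abs{x}^{-n+p\epsilon/q}\,dx\simeq\epsilon^{-1}$ (again using $\tfrac1q+\tfrac1{p'}=\alpha$). Therefore
\begin{equation*}
  \norm{A^{2,\alpha}_{\mathcal S_0}}_{L^p_{\omega^p}\to L^q_{\omega^q}}\gtrsim\frac{\epsilon^{-1/2-1/q}}{\epsilon^{-1/p}}=\epsilon^{-(\alpha-1/2)}\simeq[\omega]_{A_{pq}}^{\alpha-\frac12}.
\end{equation*}

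Letting $\epsilon\downarrow 0$ in each family, $t=[\omega]_{A_{pq}}$ runs through all sufficiently large reals, and the hypothesis together with the monotonicity of $\Phi$ yields $\Phi(t)\gtrsim t^{\frac{p'}{q}\alpha}$ and $\Phi(t)\gtrsim t^{\alpha-\frac12}$, hence $\Phi(t)\gtrsim t^{\max(\frac{p'}{q}\alpha,\,\alpha-\frac12)}$, as claimed. The main obstacle is the second example: bump functions concentrated at the origin are useless here, because on such inputs the operators $A^{r,\alpha}_{\mathcal S_0}$ coincide up to constants for every $r>0$ and so cannot detect the exponent $r=2$; one must use a spread--out input whose $\ell^2$--aggregation is genuinely smaller than its $\ell^1$--aggregation (the power $\abs{x}^{-n(1-\alpha)}$ is tuned precisely so that the cube averages are exactly balanced and the square function sees only a $\log^{1/2}$), and then verify that this still forces a blow--up against the singular weight. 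The remaining steps are routine estimates for integrals of power weights, which all go through because of the identity $\tfrac1q+\tfrac1{p'}=\alpha$.
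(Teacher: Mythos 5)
Your proof is correct. For the first exponent $\tfrac{p'}{q}\alpha$ you use essentially the same construction as the paper (power weight with $\omega^{-p'}$ barely integrable, test function $F=\omega^{-p'}\mathds{1}_{Q_0}$, yielding $\Phi(\epsilon^{-q/p'})\gtrsim\epsilon^{-\alpha}$; the paper is written in one dimension and $F(x)=\abs{x}^{\epsilon-1}$ is precisely $\omega^{-p'}$ there).

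For the second exponent $\alpha-\tfrac12$, however, you take a genuinely different route. The paper linearizes $A^{2,\alpha}_{\mathcal S}$ as a vector-valued operator into $L^q(\ell^2)$, passes to the adjoint, and then applies Khintchine's inequality to a carefully chosen random-sign sequence $(a_I)$; the exponent $\tfrac12$ arises from the $\ell^2$ normalization of that sequence against the $\ell^1$-type blow-up on the other side. You instead stay on the primal side and tune the input $F(x)=\abs{x}^{-n(1-\alpha)}\mathds{1}_{Q_0}$ so that all the cube averages $\abs{Q_k}^{-\alpha}\int_{Q_k}F$ are \emph{equal}; then $A^{2,\alpha}_{\mathcal S_0}F(x)\simeq(\log\tfrac1{\abs x})^{1/2}$ exhibits exactly the square-root loss compared to $A^{1,\alpha}$, and integrating this against the $\ell^1$-critical weight $\omega^q=\abs{x}^{-(n-\epsilon)}$ produces $\epsilon^{-q/2-1}$. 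Both arguments detect the $\ell^1$-versus-$\ell^2$ gap, but yours does so directly and elementarily, avoiding the duality/Khintchine machinery; the paper's route, by contrast, mirrors Lacey--Scurry and extends more readily to statements about the adjoint or weak-type bounds. Your closing remark about why a bump at the origin cannot work (it makes $A^{r,\alpha}$ essentially independent of $r$) is accurate and explains why the balanced power $\abs{x}^{-n(1-\alpha)}$ is the right choice. The routine computations (the constancy of $\abs{Q_k}^{-\alpha}\int_{Q_k}F$, the Gamma-function evaluation of $\int_0^1(\log\tfrac1r)^{q/2}r^{\epsilon-1}\,dr\simeq\epsilon^{-q/2-1}$, the exponent bookkeeping using $\tfrac1q+\tfrac1{p'}=\alpha$, and the $A_{pq}$-characteristic computations) all check out.
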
 
	\begin{proof}
    	For $k \in \IN_0$ choose $I_k = [0,2^{-k}]$. Then the family $\mathcal{S} = (I_k)_{k \in \IN_0}$ is $\frac{1}{2}$-sparse. We consider its associated operator $A = A^{2,\alpha}_{\mathcal{S}}$. Following~\cite[Section~7]{LMPT10}, for $\epsilon \in (0,1)$ we let  $\omega_{\epsilon}(x) = \abs{x}^{(1-\epsilon)/p'}$ and $f(x) = \abs{x}^{\epsilon -1} \mathds{1}_{[0,1]}$. Then $[\omega_{\epsilon}]_{A_{pq}} \simeq \epsilon^{-q/p'}$ and $\norm{\omega_{\epsilon} f}_{L^p} \simeq \epsilon^{-1/p}$. Now, let $x \in [2^{-(k+1)}, 2^{-k}]$ for $k \in \IN_0$. Then
    	\begin{align*}
    		A f(x) \ge \abs{I_k}^{-\alpha} \int_{I_k} \abs{y}^{\epsilon - 1} \d y \mathds{1}_{I_k}(x)  = 2^{\alpha k} \int_{0}^{2^{-k}} \abs{y}^{\epsilon - 1} \d y  \simeq \abs{x}^{-\alpha} \cdot \epsilon^{-1} \abs{x}^{\epsilon}.
    	\end{align*}
    	Consequently,
    	\begin{align*}
    		\int_{\IR} Af^q \omega_{\epsilon}^q & \ge \sum_{k=0}^{\infty} \int_{2^{-(k+1)}}^{2^{-k}} Af^q \omega_{\epsilon}^q \ge \epsilon^{-q} \int_0^1 \abs{x}^{q(\epsilon-\alpha)} \abs{x}^{\frac{q}{p'}(1-\epsilon)} \d x \\
		& = \epsilon^{-q} \int_0^1 \abs{x}^{\frac{\epsilon}{p} - 1} \d x \simeq \epsilon^{-1-q}. 
    	\end{align*}
    	This shows that $\norm{Af}_{L^q_{\omega_{\epsilon}^q}} \gtrsim \epsilon^{-1-1/q}$. Hence, we get $\Phi(\epsilon^{-q/p'}) \gtrsim \epsilon^{-1-1/q + 1/p} = \epsilon^{-(1/p' + 1/q)} = \epsilon^{-\alpha}$. This finishes the first part of the estimate.
        	
	The second upper bound follows from a duality argument: If $A = {A}_{\mathcal{S}}^{2,\alpha}$ with $\mathcal{S}$ as above is reinterpreted as a vector-valued operator in a natural way, i.e., we have a bounded {\em linear} operator $A\colon L^p_{\omega^p}(\IR) \to L^q_{\omega^q}(\IR; \ell^2)$, then its adjoint with respect to the unweighted duality maps $L^{q'}_{\omega^{-q'}}(\IR; \ell^2)$ boundedly into $L^{p'}_{\omega^{-p'}}(\IR)$.
	Applying this adjoint to $(a_I\omega^q)_{I \in \mathcal{S}}$, for a sequence $(a_I)_{I \in \mathcal{S}}$ of measurable functions, one has the estimate
    	\begin{equation*}
    		\biggnorm{\sum_{I \in \mathcal{S}} \abs{I}^{-\alpha} \int_I a_I \omega^q \mathds{1}_I}_{L^{p'}_{\omega^{-p'}}} \lesssim \Phi([\omega]_{A_{pq}}) \biggnorm{\biggl( \sum_{I \in \mathcal{I}} \abs{a_I}^2 \biggr)^{1/2}}_{L^{q'}_{\omega^q}}.
    	\end{equation*}
    	Since the right hand side is independent of the sign of $a_I$, averaging and the Khintchine inequality give
    	\begin{equation}
			\label{eq:optimal_dual_estimate}
    		\biggnorm{\biggl( \sum_{I \in \mathcal{S}} \bigl( \abs{I}^{-\alpha} \int_{I} a_I \omega^q \bigr)^2 \mathds{1}_I \biggr)^{1/2}}_{L^{p'}_{\omega^{-p'}}} \lesssim \Phi([\omega]_{A_{pq}}) \biggnorm{\biggl( \sum_{I \in \mathcal{S}} \abs{a_I}^2 \biggr)^{1/2}}_{L^{q'}_{\omega^q}}.
    	\end{equation}
    	Now, for $\epsilon \in (0,1)$ choose $\omega_{\epsilon}(x) = \abs{x}^{(\epsilon - 1)/q}$. Then $[\omega]_{A_{pq}} = [\omega^q]_{1+q/p'} \simeq \epsilon^{-1}$. Further, imitating the argument in~\cite[Section~3]{LacScu12}, we choose $a_k(x) = \epsilon^{1/2} \abs{I_k}^{-\epsilon} \abs{x}^{\epsilon} \mathds{1}_{I_k}(x)$. With this, for $x \in (2^{-(l+1)}, 2^{-l}]$ and $l \in \IN_0$
    	\begin{align*}
    		\sum_{k=0}^{\infty} a_k^2(x) & = \epsilon \abs{x}^{2\epsilon} \sum_{k=0}^{\infty} \abs{I_k}^{-2\epsilon} \mathds{1}_{[0,2^{-k}]}(x) = \epsilon \abs{x}^{2\epsilon} \sum_{k=0}^l (2^{2 \epsilon})^{k} = \epsilon \abs{x}^{2\epsilon} \frac{2^{2(l+1)\epsilon} - 1}{2^{2\epsilon} - 1} \\
    		& \lesssim \abs{x}^{2\epsilon} 2^{2 l \epsilon} \lesssim 1.
    	\end{align*}
    	This directly gives for the right hand side of~\eqref{eq:optimal_dual_estimate}
    	\begin{equation*}
    		\biggnorm{\biggl( \sum_{k=0}^{\infty} \abs{a_k}^2 \biggr)^{1/2}}_{L^{q'}_{\omega^q}} \lesssim \biggl( \int_0^1 \abs{x}^{\epsilon - 1} \d x \biggr)^{1/q'} = \epsilon^{-1/q'}.
    	\end{equation*}
    	Let us now come to the left hand side of~\eqref{eq:optimal_dual_estimate}. First,
    	\begin{align*}
    		\abs{I_k}^{-\alpha} \int_{I_k} a_{k} \omega^q = \epsilon^{1/2} 2^{k(\alpha + \epsilon)} \int_0^{2^{-k}} \abs{x}^{2\epsilon - 1} = \frac{1}{2} \epsilon^{-1/2} 2^{k(\alpha - \epsilon)}.
    	\end{align*}
    	Consequently, for $x \in [2^{-(l+1)},2^{-l})$, $l \in \IN_0$ and $\epsilon \in (0, \alpha_0]$ with $\alpha_0 < \alpha$
    	\begin{align*}
    		\sum_{k=0}^{\infty} (\abs{I_k}^{-\alpha} \int_{I_k} a_{k} \omega^q)^2 \mathds{1}_{I_k}(x) = \frac{1}{4}\epsilon^{-1} \sum_{k=0}^l 2^{2k(\alpha - \epsilon)}
		\gtrsim
		 \epsilon^{-1} (2^{l})^{2(\alpha - \epsilon)} \simeq \epsilon^{-1} \abs{x}^{-2(\alpha - \epsilon)}.
    	\end{align*}
    	Thus,
    	\begin{align*}
    		\MoveEqLeft \biggnorm{\biggl( \sum_{I \in \mathcal{S}} \bigl( \abs{I}^{-\alpha} \int_{I} a_I \omega^q \bigr)^2 \mathds{1}_I \biggr)^{1/2}}_{L^{p'}_{\omega^{-p'}}} 
		\gtrsim
		 \epsilon^{-\frac{1}{2}} \biggl( \int_0^1 \abs{x}^{-p'(\alpha - \epsilon)} \abs{x}^{-(\epsilon - 1) \frac{p'}{q}} \d x \biggr)^{1/p'} \\
    		& = \epsilon^{-\frac{1}{2}} \left( 1 - p'(\alpha - \epsilon + \frac{1}{q} (\epsilon - 1)) \right)^{1/p'} = (\frac{p'}{q'})^{1/p'} \epsilon^{-\frac{1}{2} - \frac{1}{p'}}.
    	\end{align*}
    	Hence, one has for sufficiently small $\epsilon$ the estimate $\Phi(\epsilon^{-1}) \gtrsim \epsilon^{-\frac{1}{2} - \frac{1}{p'} + \frac{1}{q'}} = \epsilon^{\frac{1}{2} - \alpha}$, which gives the desired bound by the monotonicity of $\Phi$.
	\end{proof}


\end{document}